\tikzset{->-/.style={decoration={  markings,  mark=at position #1 with
    {\arrow{>}}},postaction={decorate}}}
\tikzset{-<-/.style={decoration={  markings,  mark=at position #1 with
    {\arrow{<}}},postaction={decorate}}}
\newcommand{\Note}[1]{\textcolor{NavyBlue}{\texttt{#1}}}
\newcommand{\new}[1]{\textcolor{Blue}{\texttt{#1}}}
\theoremstyle{plain}
\newtheorem{theorem}{Theorem}[section]
\newtheorem{thmx}{Theorem}
\newtheorem{lemma}[theorem]{Lemma}
\newtheorem{corollary}[theorem]{Corollary}
\newtheorem{proposition}[theorem]{Proposition}
\theoremstyle{definition}
\newtheorem{definition}[theorem]{Definition}
\newtheorem{construction}[theorem]{Construction}
\numberwithin{equation}{section}
\def\hua{\mathcal}
\def\kong{\mathbb}
\def\<{\langle}
\def\>{\rangle}
\def\Aut{\operatorname{Aut}}
\def\Sim{\operatorname{Sim}}
\def\Hom{\operatorname{Hom}}
\def\Ext{\operatorname{Ext}}
\def\Stab{\operatorname{Stab}}
\def\Stap{\operatorname{Stab}^\circ}
\def\diff{\operatorname{d}}
\newcommand{\h}{\operatorname{\hua{H}}}            
\renewcommand{\k}{\mathbf{k}}
\newcommand{\Cone}{\operatorname{Cone}}
\def\numbers{\begin{enumerate}[label=\arabic*{$^\circ$}.]}
\def\ends{\end{enumerate}}
\newcommand{\EG}{\operatorname{EG}}       
\newcommand{\EGp}{\operatorname{EG}^\circ}       
\newcommand{\D}{\operatorname{\hua{D}}}
\newcommand{\E}{\operatorname{\hua{E}}}
\newcommand{\per}{\operatorname{per}}
\newcommand\Sph{\operatorname{Sph}}
\def\zero{\hua{H}_\Gamma}
\newcommand{\Tri}{\bigtriangleup}
\def\arrow{red}
\def\surf{\mathbf{S}}                       
\newcommand{\ST}{\operatorname{ST}}        
\newcommand{\BT}{\operatorname{BT}}        
\def\TT{\mathbf{T}}
\def\T{\kong{T}}
\def\M{\mathbf{M}}
\def\ee{\operatorname{\mathfrak{E}}}
\def\surfo{{\mathbf{S}}_\Tri}
\def\cA{\operatorname{CA}}
\def\Rhom{\operatorname{\hua{H}\it{om}}}
\title[Decorated marked surfaces III]
{Decorated marked surfaces III: The derived category of a decorated marked surface}
\author{Aslak Bakke Buan}
\address{Institutt for matematiske fag, NTNU,
 N-7491 Trondheim, Norway.}
\email{aslakb@math.ntnu.no}
\author{Yu Qiu}
\address{Department of Mathematics,
Chinese University of Hong Kong,
N.T., Hong Kong}
\email{yu.qiu@bath.edu}
\author{Yu Zhou}
\address{Yau Mathematical Sciences Center, Tsinghua University, 100084 Beijing, China}
\email{yuzhoumath@gmail.com}
\date{\today}
\begin{document}

\begin{abstract}

    We study the Ginzburg dg algebra $\Gamma_\TT$ associated to the quiver with potential
    arising from a triangulation $\TT$ of a decorated marked surface $\surfo$, in the sense of \cite{QQ}.
    We show that there is a canonical way to identify all finite dimensional derived categories
    $\D_{fd}(\Gamma_\TT)$, denoted by $\D_{fd}(\surfo)$.
    As an application, we show that the spherical twist group $\ST(\surfo)$ associated to
    $\D_{fd}(\surfo)$ acts faithfully on its space of stability conditions.

\end{abstract}

\keywords{Calabi-Yau categories, spherical twists, quivers with potential, derived equivalences, stability conditions}

\maketitle

\section{Introduction}

Cluster algebras and quiver mutation were introduced by Fomin and Zelevinsky \cite{FZ}, and (additive) categorification
of such structures, often in terms of triangulated categories, have successfully contributed to the development of a rich theory,
see e.g. the surveys by Keller\cite{Kel3,Kel4} or Reiten \cite{R}.
Derksen-Weyman-Zelevinsky \cite{DWZ} introduced quivers with potential (QP) and the corresponding Jacobian algebras, and studied mutation of quivers with potential.
Keller-Yang \cite{KY} studied the categorification of such mutations via Ginzburg dg algebras \cite{G}.
One of the applications of their categorification is motivic Donaldson-Thomas invariants, using quantum cluster algebras \cite{K11}.

Additive categorification is deeply related to classical tilting theory \cite{BMRRT,A}.
Algebras related by tilting are derived equivalent, while (Jacobian) algebras related by mutation of quivers with potential are in general not.
However, Keller-Yang constructed in \cite{KY} an equivalence between the derived category $\D(\Gamma(Q,W))$ of the Ginzburg
dg algebra $\Gamma(Q,W)$
of a QP $(Q,W)$ and
that of $\D(\Gamma(\widetilde{Q},\widetilde{W}))$ of a QP $(\widetilde{Q},\widetilde{W})$, obtained by a single mutation from $(Q,W)$.
The equivalence also restricts to the subcategories of dg modules of finite dimensional homology $\D_{fd}(\Gamma(Q,W))$
and $\D_{fd}(\Gamma(\widetilde{Q},\widetilde{W}))$. Note that these subcategories are 3-Calabi-Yau, by \cite{K8}.
However, in general there is no canonical choice for such equivalences, basically because mutation of QPs is only well-defined up to a non-canonical {\em choice of decomposition} of a QP into
a {\em trivial part} and a {\em reduced part} (see Section~\ref{subsec:KY}).

We consider a special class of quivers with potential, that is, those arising from (unpunctured) marked surfaces $\surf$. This class of examples was
first introduced in cluster theory by Fomin-Shapiro-Thurston \cite{FST} and further studied by many authors, including Labardini-Fragoso \cite{LF} who gave the interpretation in terms of corresponding
QPs.
When studying the 3-Calabi-Yau categories and stability conditions,
it is natural to decorate the marked surface $\surf$
with a set $\Tri$ of decorating points (which are zeroes of the
corresponding quadratic differentials, cf. \cite{BS,QQ}).
More details about motivation and background can be found in \cite{QQ}.

Building on the prequels \cite{QQ,QZ2},
we prove a class of intrinsic derived equivalences that
are compatible with Keller-Yang's and are stronger in this special case.
More precisely, this class of equivalences implies the following main result.

\begin{thmx}[see Theorem~\ref{thm:comp}]\label{thma}
There is a unique canonical 3-Calabi-Yau category $\D_{fd}(\surfo)$ associated to a decorated marked surface $\surfo$.
\end{thmx}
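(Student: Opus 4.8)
The plan is to assemble a coherent family of derived equivalences among the categories $\D_{fd}(\Gamma_\TT)$, as $\TT$ ranges over the triangulations of $\surfo$, and to prove that this family is \emph{path-independent}: the equivalence obtained by transporting along a sequence of flips depends only on its endpoints. Since the flip graph of $\surfo$ is connected \cite{FST}, path-independence lets us fix a base triangulation $\TT_0$ and transport along flips to identify every $\D_{fd}(\Gamma_\TT)$ with $\D_{fd}(\Gamma_{\TT_0})$; the resulting category, well defined up to canonical equivalence, is $\D_{fd}(\surfo)$. Two ingredients are needed: a preferred equivalence attached to each flip, and the verification that composites around loops are trivial.

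For the first ingredient, a flip $\TT \xrightarrow{\gamma} \TT'$ exchanging an arc $\gamma$ for $\gamma'$ is a single QP-mutation $\mu_\gamma$, and Keller--Yang \cite{KY} produce a derived equivalence $\D(\Gamma(\mu_\gamma(Q_\TT,W_\TT))) \to \D(\Gamma_\TT)$ restricting to finite-dimensional homology. The only non-canonical input is the identification $\mu_\gamma(Q_\TT,W_\TT)\cong(Q_{\TT'},W_{\TT'})$, i.e. the choice of reduced part. In our situation this is pinned down by the arc-labeling: vertices of $Q_\TT$ are the arcs of $\TT$, the flip changes exactly the label $\gamma\mapsto\gamma'$, and the potential is the one canonically attached to the triangulation. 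Composing with this labeled isomorphism gives a canonical flip-equivalence $F_\gamma\colon \D_{fd}(\Gamma_\TT)\to\D_{fd}(\Gamma_{\TT'})$, which under the dictionary of the prequels \cite{QQ,QZ2} realizes the simple HRS-tilt of the canonical heart of $\TT$ at the simple labeled by $\gamma$.

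For path-independence I would use that the fundamental groupoid of the flip graph is generated by its local $2$-cells, namely \emph{squares} (flips at two arcs bounding no common triangle) and \emph{pentagons} (the five triangulations of a pentagon region). Over a square the two mutations occur at vertices joined by no arrow, so they commute strictly and the corresponding $F_\gamma$ commute on the nose. The pentagon is the crux: the composite of the five canonical flip-equivalences around it is an autoequivalence $\Theta$ of $\D_{fd}(\Gamma_\TT)$ that, by construction, preserves the canonical heart and fixes each simple object together with its arc-label. Invoking the rigidity principle that a standard triangulated autoequivalence preserving a bounded t-structure and fixing all of its simples is isomorphic to the identity, we get $\Theta\cong\id$. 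The main obstacle lies precisely here: one must check that the labeled isomorphisms $\mu_\gamma(Q_\TT,W_\TT)\cong(Q_{\TT'},W_{\TT'})$ chain up correctly around the pentagon, i.e. that Keller--Yang's functors satisfy pentagon coherence for the surface potentials. I expect to localize this to the pentagon's five vertices, where the ambient Ginzburg algebras reduce to those of $A_2$-type and the induced action on the cofibrant resolutions of the simples can be computed by hand.

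Granting these two points, transport along flips produces canonical, choice-free identifications $\D_{fd}(\Gamma_\TT)\simeq\D_{fd}(\Gamma_{\TT_0})=:\D_{fd}(\surfo)$. Each $\D_{fd}(\Gamma_\TT)$ is $3$-Calabi-Yau by \cite{K8}, and the flip-equivalences preserve this structure, so $\D_{fd}(\surfo)$ is a $3$-Calabi-Yau category. Uniqueness is then immediate: any other system of canonical identifications differs from this one by an autoequivalence compatible with every $F_\gamma$, hence fixing the canonical heart and its simples, and so equal to the identity.
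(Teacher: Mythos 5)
Your proposal takes a genuinely different route from the paper, but it has a gap at its central step. The reduction of path-independence to squares and pentagons invokes simple connectivity of the wrong graph. The classical fact (Harer; cf.\ \cite{FST}) that squares and pentagons bound all loops holds for the \emph{undecorated} flip graph of $\surf$, where flips are involutive. The equivalences you are composing, however, are indexed by the \emph{decorated} exchange graph $\EGp(\surfo)$, and the two graphs differ exactly where it matters: in the undecorated graph the loop ``flip the same arc twice'' is trivial, whereas the corresponding composite of Keller--Yang equivalences is a nontrivial spherical twist --- this is precisely why decorations are introduced. So over the undecorated graph path-independence is simply false, and over the decorated graph, where those loops are unwound, you would need to prove that squares and pentagons still generate the fundamental groupoid of $\EGp(\surfo)$. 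That statement does not follow from the classical result (the projection $\EGp(\surfo)\to\EG(\surf)$ is not a covering of $2$-complexes, since forward and backward flips collapse to the same unoriented edge), it is nowhere proved in your sketch, and it is of the same order of difficulty as simple connectivity of the space of stability conditions --- a theorem established only in later work of King--Qiu on cluster exchange groupoids, whose proof uses the canonical category $\D_{fd}(\surfo)$ that you are trying to construct. The paper sidesteps all of this: it defines, for each $\TT$, an \emph{intrinsic} equivalence $\Psi_\TT$ via the geometric basis $\{\varphi(-,-)\}$ of the Ext-algebra and Koszul duality (Construction~\ref{con:iota}, Lemma~\ref{lem:con}, diagram~\eqref{eq:deeq}), and then proves by induction along an arbitrary flip sequence that each Keller--Yang equivalence preserves this basis (Proposition~\ref{prop:KY}, Theorem~\ref{thm:comp}); no knowledge of $\pi_1$ of the exchange graph is ever needed.

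There is a second, independent problem: your ``rigidity principle'' is false as stated. An autoequivalence preserving a bounded t-structure and fixing each simple object need not be isomorphic to the identity functor; for instance, autoequivalences induced by rescaling the arrows of $(Q_\TT,W_\TT)$ fix the canonical heart and all of its simples. By Koszul duality \eqref{eq:DE2}, what determines an equivalence is its action on the Ext-algebra $\E(\h_\TT)$ (together with its $A_\infty$-structure, trivial here by \cite{QZ2}), so to kill your pentagon composite $\Theta$ you must show it induces the identity on $\E(\h_\TT)$ --- and controlling that action is exactly the hard technical content of the paper's Sections~3--4 (explicit computation of Keller--Yang's equivalence on cofibrant resolutions of simples, and preservation of the morphisms $\varphi(-,-)$). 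Relatedly, ``localizing'' the pentagon coherence to an $A_2$-type subquiver is not innocent: there is no restriction functor between the ambient 3-Calabi--Yau categories compatible with the Keller--Yang bimodule equivalences, so that step would also require a real argument, as would the claim that the two square flips commute ``on the nose.''
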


Given a Ginzburg dg algebra $\Gamma$, one can
consider the {\em spherical twist group} $\ST(\Gamma)$ of $\D_{fd}(\Gamma)$ in $\Aut\D_{fd}(\Gamma)$.
In particular for a decorated marked surface $\surfo$, we study the spherical twist group  $\ST(\surfo)$ and the principal component $\Stap(\surfo)$ of the space of stability conditions on $\D_{fd}(\surfo)$ (see Section 5 for details).
We then obtain the following, as an application of our main theorem.

\begin{thmx}[Theorem~\ref{thmbb}]\label{thmb}
The spherical twist group $\ST(\surfo)$ acts faithfully on the principal component $\Stap(\surfo)$ of the space of stability conditions on $\D_{fd}(\surfo)$.
\end{thmx}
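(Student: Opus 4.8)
The plan is to follow the strategy of Qiu--Woolf, reducing the faithfulness of the $\ST(\surfo)$-action to the statement that no nontrivial element of $\ST(\surfo)$ can fix an entire chamber of $\Stap(\surfo)$, and then to import the geometric rigidity of the surface established in the prequels. Recall first the cell structure of $\Stap(\surfo)$: it is covered by cells $U(\h)$ indexed by the hearts $\h$ lying in the principal component $\EGp(\surfo)$ of the exchange graph, where $U(\h)$ consists of those stability conditions whose heart is a tilt of $\h$. Each such cell is cut out by the central charges of the finitely many simples $\Sim\h$, and the $\Aut\D_{fd}(\surfo)$-action is cellular, with $\Phi\cdot U(\h)=U(\Phi\,\h)$. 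In particular the map sending a heart to the interior of its cell is injective and $\ST(\surfo)$-equivariant.

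Now suppose $\Phi\in\ST(\surfo)$ acts as the identity on $\Stap(\surfo)$; the goal is $\Phi\cong\id$. Since $\Phi$ fixes every point it fixes every cell setwise, so $\Phi(\h)=\h$ for all $\h\in\EGp(\surfo)$; that is, $\Phi$ preserves every heart in the principal component. Fix one such heart $\h$ and pick $\sigma\in U(\h)$ whose central charge is in general position, so that the simples of $\h$ are stable with pairwise distinct phases. From $\Phi(\sigma)=\sigma$ we see that $\Phi$ preserves $\h$ together with the phase of every semistable object; as $\Phi$ permutes the stable objects of $\sigma$ preserving their phases and the simples $\Sim\h$ have distinct phases, each $S\in\Sim\h$ is fixed up to isomorphism. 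Hence $\Phi$ acts trivially on the vertices of the quiver of $\h$ and on $K_0(\D_{fd}(\surfo))$.

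The remaining task is to upgrade ``$\Phi$ fixes every heart and every simple'' to ``$\Phi\cong\id$ as a functor,'' and this is the main obstacle, since fixing a generating family of objects does not by itself pin down an autoequivalence. Here I would invoke the geometric model: under the identification $\ST(\surfo)\cong\BT(\surfo)$ of the spherical twist group with the braid twist group inside the decorated mapping class group $\MCG(\surfo)$ from \cite{QZ2}, the information that $\Phi$ fixes the simples of $\h$ translates into a mapping class fixing, up to isotopy, every arc of the triangulation corresponding to $\h$. Since the arcs of a triangulation fill $\surfo$, the Alexander method forces that mapping class to be isotopic to the identity, whence the associated braid twist, and therefore $\Phi$, is trivial.

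Finally I would observe that the same cell structure in fact shows the $\ST(\surfo)$-action on $\Stap(\surfo)$ to be free and properly discontinuous, so that $\Stap(\surfo)\to\Stap(\surfo)/\ST(\surfo)$ is a covering map and faithfulness follows at once; either route reduces to the surface-theoretic rigidity of the previous paragraph, which is where the real work sits, the categorical reductions being formal consequences of the heart stratification and Theorem~\ref{thma}.
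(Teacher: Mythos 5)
Your opening reductions are sound and in fact coincide with the paper's own first step: the paper proves a lemma showing that an autoequivalence acts trivially on $\Stap(\surfo)$ if and only if it acts trivially on $\Sph(\surfo)=\bigcup_{\h\in\EGp(\surfo)}\Sim\h$, which is exactly what your chamber/general-position argument delivers. The gap is in the step you yourself flag as ``the real work.'' You invoke ``the identification $\ST(\surfo)\cong\BT(\surfo)$'' to transport the problem into the mapping class group and finish by the Alexander method. But that identification is not available as an input: what the prequel \cite{QQ} provides (and what this paper quotes) is only $\ST(\surfo)/\Aut_0\cong\BT(\surfo)$, where $\Aut_0$ is precisely the subgroup acting trivially on $\Stap(\surfo)$. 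So the homomorphism from $\ST(\surfo)$ to the braid twist group has kernel exactly the elements whose triviality you are trying to establish, and the unquotiented statement $\ST(\surfo)=\BT(\surfo)$ appears in the paper as a \emph{corollary} of Theorem~\ref{thmbb}, not as an ingredient. Your argument is therefore circular: the Alexander method only shows that the image of $\Phi$ in $\BT(\surfo)$ is trivial, i.e.\ that $\Phi\in\Aut_0$, which is your hypothesis. The same objection hits your closing claim that the action is free and properly discontinuous --- freeness is (even stronger than) the faithfulness being proved. (Also, as a citation matter, the quotient isomorphism is from \cite{QQ}, not \cite{QZ2}.)

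What is actually needed is a categorical mechanism to upgrade ``$\phi$ fixes every reachable heart and every simple'' to ``$\phi\cong\id$ as a functor,'' and this is exactly the role of the paper's main Theorem~\ref{thm:comp}. Concretely: by \cite[Corollary~8.5]{KQ}, $\phi(\h_\TT)$ is obtained from $\h_\TT$ by a finite sequence of simple tiltings, so $\phi$ can be realized as a composition of Keller--Yang equivalences along the corresponding flip sequence $\TT\to\TT'$; Theorem~\ref{thm:comp} asserts that any such composition is naturally isomorphic to the intrinsic equivalence $\Psi_{\TT'}$, which is determined purely by the Ext-algebra isomorphism $\iota_{\TT'}$ of Construction~\ref{con:iota} (defined via the geometric basis $\varphi(-,-)$). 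When $\phi$ acts trivially on $\Sph(\surfo)$, one has $\TT'=\TT$ and $\iota_{\TT'}=\id$, whence $\phi\cong\id$. In other words, the path-independence of compositions of Keller--Yang equivalences (Theorem~\ref{thma}) is the unavoidable content here; it cannot be bypassed by quoting the braid-twist isomorphism, since in its unquotiented form that isomorphism is equivalent to the theorem you are proving.
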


We give preliminary results and background in Section 2, we give an explicit description of Keller-Yang's equivalence on the finite derived category in Section 3, we prove our main result Theorem~\ref{thma} in Section 4, and we give background for and proof of Theorem~\ref{thmb}
in Section 5.

Throughout the paper, a composition $fg$ of morphisms $f$ and $g$ means first $g$ and then $f$. But a composition $ab$ of arrows $a$ and $b$ means first $a$ then $b$. Any (dg) module is a right (dg) module.

\subsection*{Acknowledgements}
The second author would like to thank A.~King and J.~Grant for
interesting discussions.
This work was supported by the Research Council of Norway, grant No.NFR:231000.

\section{Preliminaries}\label{sec:bg}
\subsection{Decorated marked surfaces}
Throughout the paper,
$\surf$ denotes a \emph{marked surface} without punctures in the sense of Fomin-Shapiro-Thurston \cite{FST}.
That is, $\surf$ is a connected compact surface with a fixed orientation and
with a finite set $\M$ of marked points on the (non-empty) boundary $\partial\surf$
having the property that
each connected component of $\partial\surf$ contains at least one marked point.
Up to homeomorphism, $\surf$ is determined by the following data\new{:}
\begin{itemize}
\item the genus $g$;
\item the number $|\partial\surf|$ of boundary components;
\item the integer partition of $|\M|$ into $|\partial\surf|$
parts describing the number of marked points
on each boundary component.
\end{itemize}
We require that
\begin{gather}\label{eq:n}
n=6g+3|\partial\surf|+|\M|-6
\end{gather}
is at least one. A triangulation of $\surf$ is a maximal collection of non-crossing and non-homotopic simple curves on $\surf$, whose endpoints are in $\M$. It is well-known that any triangulation of $\surf$ consists of $n$
simple curves (\cite[Proposition~2.10]{FST}) and divides $\surf$ into
\begin{gather}\label{eq:Tri}
    \aleph=\frac{2n+|\M|}{3}
\end{gather}
triangles (\cite[(2.9)]{QQ}).

\begin{definition}[{\cite[Definition~3.1]{QQ}}]\label{def:arcs}
A \emph{decorated marked surface} $\surfo$ is a marked surface $\surf$ together with
a fixed set $\Tri$ of $\aleph$ `decorating' points in the interior of $\surf$
(where $\aleph$ is defined in \eqref{eq:Tri}),
which serve as punctures.
Moreover, a (simple) \emph{open arc} in $\surfo$ is (the isotopy class of) a (simple) curve in $\surfo-\Tri$
that connects two
marked points in $\M$, which is neither isotopic to a boundary segment nor to a point.
\end{definition}

A triangulation of $\TT$ of $\surfo$ is a collection of 
simple open
arcs that divides $\surfo$ into $\aleph$ triangles, each containing exactly one decorating point
inside (cf. \cite[\S~3]{QQ}).
We also have the notion of (forward/backward) flips of triangulations of $\surfo$,
cf. Figure~\ref{fig:flips}.
Denote by $\EG(\surfo)$ the exchange graph of triangulations of $\surfo$,
that is the oriented graph whose vertices are the triangulations and whose edges are
the forward flips between them.

From now on, we will fix a connected component $\EGp(\surfo)$.
When we say a triangulation of $\surfo$ later, we always mean that it is in this component.

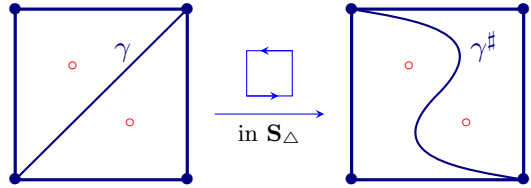
\begin{figure}[ht]\centering
\begin{tikzpicture}[scale=.4]
    \path (-135:4) coordinate (v1)
          (-45:4) coordinate (v2)
          (45:4) coordinate (v3);
\draw[NavyBlue,very thick](v1)to(v2)node{$\bullet$}to(v3);
    \path (-135:4) coordinate (v1)
          (45:4) coordinate (v2)
          (135:4) coordinate (v3);
\draw[NavyBlue,very thick](v2)node{$\bullet$}to(v3)node{$\bullet$}to(v1)node{$\bullet$}
(45:1)node[above]{$\gamma$};
\draw[>=stealth,NavyBlue,thick](-135:4)to(45:4);
\draw[red,thick](135:1.333)node{\tiny{$\circ$}}(-45:1.333)node{\tiny{$\circ$}};
\end{tikzpicture}
\begin{tikzpicture}[scale=1.2, rotate=180]
\draw[blue,<-,>=stealth](3-.6,.7)to(3+.6,.7);
\draw(3,.7)node[below,black]{\footnotesize{in $\surfo$}};
\draw[blue](3-.25,.5-.5)rectangle(3+.25,.5);\draw(3,1.5)node{};
\draw[blue,->,>=stealth](3-.25,.5-.5)to(3+.1,.5-.5);
\draw[blue,->,>=stealth](3+.25,.5)to(3-.1,.5);
\end{tikzpicture}
\begin{tikzpicture}[scale=.4];
    \path (-135:4) coordinate (v1)
          (-45:4) coordinate (v2)
          (45:4) coordinate (v3);
\draw[NavyBlue,very thick](v1)to(v2)node{$\bullet$}to(v3)
(45:1)node[above right]{$\gamma^\sharp$};
    \path (-135:4) coordinate (v1)
          (45:4) coordinate (v2)
          (135:4) coordinate (v3);
\draw[NavyBlue,very thick](v2)node{$\bullet$}to(v3)node{$\bullet$}to(v1)node{$\bullet$};
\draw[>=stealth,NavyBlue,thick](135:4).. controls +(-10:2) and +(45:3) ..(0,0)
                             .. controls +(-135:3) and +(170:2) ..(-45:4);
\draw[red,thick](135:1.333)node{\tiny{$\circ$}}(-45:1.333)node{\tiny{$\circ$}};
\end{tikzpicture}
\caption{A forward flip}
\label{fig:flips}
\end{figure}

\subsection{Quivers with potential and Ginzburg dg algebras}\label{sec:QP}
Let $Q$ be a quiver without loops or oriented 2-cycles. A potential $W$ is a linear combination of cycles in $Q$. Denote by $Q_0$ the set of vertices of $Q$ and by $Q_1$ the set of arrows of $Q$. Denote by $s(a)$ (resp. $t(a)$) the source (resp. target) of an arrow $a$. Denote by $e_i$ the trivial path at a vertex $i\in Q_0$.


Fix an algebraically closed field $\k$. All categories considered are
$\k$-linear.
Denote by $\Gamma=\Gamma(Q,W)$ the \emph{Ginzburg dg algebra (of degree 3)} associated to
a quiver with potential $(Q,W)$,
which is constructed as follows (cf. \cite{G,KY}):
\begin{itemize}
\item   Let $\overline{Q}$ be the graded quiver whose vertex set is $Q_0$
and whose arrows are:
\begin{itemize}
\item the arrows in $Q_1$ with degree $0$;
\item an arrow $a^*:j\to i$ with degree $-1$ for each arrow $a:i\to j$ in $Q_1$;
\item a loop $e_i^*:i\to i$ with degree $-2$ for each vertex $i$ in $Q_0$.
\end{itemize}
The underlying graded algebra of $\Gamma$ is the completion of
the graded path algebra $\k \overline{Q}$ in the category of graded vector spaces
with respect to the ideal generated by the arrows of $\overline{Q}$.
\item The differential $\diff$ of $\Gamma$ is the unique continuous linear endomorphism,
homogeneous of degree $1$, which satisfies the Leibniz rule and takes the following values
\begin{itemize}
  \item $\diff a = 0$ for any $a\in Q_1$,
  \item $\diff a^* = \partial_a W$ for any $a\in Q_1$ and
  \item $\diff \sum_{i\in Q_0} e_i^*=\sum_{a\in Q_1} \, [a,a^*]$.
\end{itemize}
\end{itemize}
Denote by $\D(\Gamma)$ the derived category of $\Gamma$. We will focus on studying the finite-dimensional derived category
$\D_{fd}(\Gamma)$ of $\Gamma$, which is the full subcategory of $\D(\Gamma)$ consisting of the dg $\Gamma$-modules whose total homology is finite dimensional. This category is 3-Calabi-Yau \cite{K8}, that is, for any pair of objects $L,M$ in $\D_{fd}(\Gamma)$, we have a natural isomorphism
\begin{equation}
    \Hom_{\D_{fd}(\Gamma)}(L,M)\cong D\Hom_{\D_{fd}(\Gamma)}(M,L[3])
\end{equation}
where $D=\Hom_\k(-,\k)$.

Following \cite{FST,LF}, one can associate a quiver with potential $(Q_\TT,W_\TT)$ to
each triangulation $\TT$ of $\surfo$ as follows:
\begin{itemize}
  \item the vertices of $Q_\TT$ are indexed by the open arcs in $\TT$;
  \item each clockwise angle in a triangle of $\TT$ gives an arrow between the vertices indexed by the edges of the angle;
  \item each triangle in $\TT$ with three edges being open arcs gives a 3-cycle (up to cyclic permutation) and the potential $W_\TT$ is the sum of such 3-cycles.
\end{itemize}
Then we have the corresponding Ginzburg dg algebra $\Gamma_\TT=\Gamma(Q_\TT,W_\TT)$
and the 3-Calabi-Yau category $\D_{fd}(\Gamma_\TT)$.

\subsection{Mutations and Keller-Yang's equivalences}\label{subsec:KY}

Let $(Q,W)$ be a quiver with potential. For a vertex $k$ in $Q$, the \emph{pre-mutation}  $\widetilde{\mu}_k(Q,W)=(\widetilde{Q},\widetilde{W})$ at $k$ is a new quiver with potential, defined as follows. The new quiver $\widetilde{Q}$ is obtained from $Q$ by
\begin{enumerate}
\item[Step 1] For any composition $ab$ of two arrows with $t(a)=s(b)=k$, add a new arrow $[ab]$ from $s(a)$ to $t(b)$.
\item[Step 2] Replace each arrow $a$ with $s(a)=k$ or $t(a)=k$ by an arrow $a'$ with $s(a')=t(a)$ and $t(a')=s(a)$.
\end{enumerate}
The new potential
\[\widetilde{W}=\widetilde{W}_1+\widetilde{W}_2\]
where $\widetilde{W}_1$ is obtained from $W$ by replacing each composition $ab$ of arrows with $t(a)=s(b)=k$ by $[ab]$, and $\widetilde{W}_2$ is the sum of the 3-cycles of the form $[ab]b'a'$. Denote by $\widetilde{\Gamma}=\Gamma(\widetilde{Q},\widetilde{W})$ the corresponding Ginzburg dg algebra.
Let $P_i=e_i\Gamma$ be the indecomposable direct summand of $\Gamma$ corresponding to a vertex $i$. Denote by $P_i^?$ a copy of $P_i$, where $?$ can be an arrow, or a pair of arrows.

The \emph{forward mutation} of $\Gamma$ at $P_k$ in $\per\Gamma$ is $\mu^\sharp_k(\Gamma)=\bigoplus_{i\in Q_0} \widetilde{P}_i$, where $\widetilde{P}_i=P_i$ if $i\neq k$, and $\widetilde{P}_k$ has the underlying graded space
\[|\widetilde{P}_k|=P_k[1]\oplus\bigoplus_{\rho\in Q_1:t(\rho)=k}P^\rho_{s(\rho)}\]
with the differential
\[d_{\widetilde{P}_k}=\begin{pmatrix}
d_{P_k[1]}&0\\
\rho & d_{P^\rho_{s(\rho)}}
\end{pmatrix}.\]

\begin{construction}[\cite{KY}]\label{con:KY}
There is a map between dg algebras \[f_{?}:\widetilde{\Gamma}\to\Rhom_\Gamma(\mu^\sharp_k(\Gamma),\mu^\sharp_k(\Gamma))\]
constructed as follows, where $\xrightarrow{a}$ means the left multiplication by $a$:
\begin{enumerate}
	\item for an arrow $\alpha\in Q_1$ with $t(\alpha)=k$,
	\begin{itemize}
		\item $f_{\alpha'}:P_{s(\alpha)}\to \widetilde{P}_k$ of degree 0 is given by
		\[P_{s(\alpha)}\xrightarrow{\begin{pmatrix}
			0 \\ \delta_{\alpha,\rho}
			\end{pmatrix}} P_k[1]\oplus\bigoplus_{\rho\in Q_1:t(\rho)=k}P^\rho_{s(\rho)} \]
		where $\delta_{\alpha,\rho}=1$ if $\alpha=\rho$ and 0 else;
		\item $f_{\alpha'^\ast}:\widetilde{P}_k\to P_{s(\alpha)}$ of degree -1 is given by
		\[P_k[1]\oplus\bigoplus_{\rho\in Q_1:t(\rho)=k}P^\rho_{s(\rho)}\xrightarrow{\begin{pmatrix}
			-\alpha e_k^\ast & -\alpha\rho^\ast
			\end{pmatrix}} P_{s(\alpha)}\]
	\end{itemize}
	\item for an arrow $\beta\in Q_1$ with $s(\beta)=k$,
	\begin{itemize}
		\item $f_{\beta'}:\widetilde{P}_k\to P_{t(\beta)}$ of degree 0 is given by
		\[P_k[1]\oplus\bigoplus_{\rho\in Q_1:t(\rho)=k}P^\rho_{s(\rho)}\xrightarrow{\begin{pmatrix}
			\beta^\ast & \partial_{\rho\beta}W
			\end{pmatrix}}P_{t(\beta)}\]
		\item $f_{\beta'^\ast}:P_{t(\beta)}\to \widetilde{P}_k$ of degree -1 is given by
		\[P_{t(\beta)}\xrightarrow{\begin{pmatrix}
			-\beta \\ 0
			\end{pmatrix}}P_k[1]\oplus\bigoplus_{\rho\in Q_1:t(\rho)=k}P^\rho_{s(\rho)}\]
	\end{itemize}
	\item for a pair of arrows $\alpha,\beta\in Q_1$ with $t(\alpha)=k=s(\beta)$, \[f_{[\alpha\beta]}:P_{t(\beta)}\xrightarrow{-\alpha\beta}P_{s(\alpha)}\]
	and
	\[f_{[\alpha\beta]^\ast}:P_{s(\alpha)}\xrightarrow{0}P_{t(\beta)}\]
	\item for an arrow $\gamma$ in $Q_1$ not incident to $k$, $f_{\gamma}:P_{t(\gamma)}\xrightarrow{\gamma} P_{s(\gamma)}$ and $f_{\gamma^\ast}:P_{s(\gamma)}\xrightarrow{\gamma^\ast} P_{t(\gamma)}$;
	\item for a vertex $i\in Q_0$ different from $k$, $f_{{e'^\ast}_i}: P_i\xrightarrow{e^\ast_i} P_i$;
	\item $f_{e'^\ast_k}:\widetilde{P}_k\to \widetilde{P}_k$ of degree -2 is given by
	\[P_k[1]\oplus\bigoplus_{\rho\in Q_1:t(\rho)=k}P^\rho_{s(\rho)}
	\xrightarrow{\begin{pmatrix}
		-e^\ast_k & -\rho^\ast \\ 0 & 0
		\end{pmatrix}}
	P_k[1]\oplus\bigoplus_{\rho\in Q_1:t(\rho)=k}P^\rho_{s(\rho)} \]
\end{enumerate}
\end{construction}

The main result in \cite{KY} is the following derived equivalence.

\begin{theorem}[{\cite[Proposition~3.5 and Theorem~3.2]{KY}}]\label{thm:KY}
The map $f_?$ is a homomorphism of dg algebras. In this way, $\mu^\sharp_k(\Gamma)$ becomes a left dg $\widetilde{\Gamma}$-module. Moreover, the $\widetilde{\Gamma}$-$\Gamma$-bimodules $\mu^\sharp_k(\Gamma)$ induces a triangle equivalence $F=?\overset{L}{\otimes}_{\widetilde{\Gamma}}\mu^\sharp_k(\Gamma): \D(\widetilde{\Gamma})\to\D(\Gamma)$, with inverse $\mathcal{H}om_{\Gamma}(\mu^\sharp_k(\Gamma),?):\D(\Gamma)\to\D(\widetilde{\Gamma})$.
\end{theorem}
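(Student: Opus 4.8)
The plan is to invoke the dg-Morita theory for derived categories of dg algebras. Abbreviate $A=\widetilde\Gamma$, $B=\Gamma$ and let $X=\mu^\sharp_k(\Gamma)$, viewed as a bounded complex of finitely generated projective right $B$-modules. Recall the standard criterion: a right $B$-module $X$ together with a dg algebra homomorphism $g\colon A\to\RHom_B(X,X)$ makes $X$ an $A$-$B$-bimodule and turns $?\lten_A X\colon\D(A)\to\D(B)$ into a triangle equivalence as soon as (a) $X$ is perfect over $B$, (b) $X$ generates $\per\Gamma$ as a thick subcategory, and (c) $g$ is a quasi-isomorphism. Taking $g=f_?$, the proof thus splits into checking that $f_?$ is a dg algebra homomorphism and then verifying (a)--(c).

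First I would verify that $f_?$ is a homomorphism of dg algebras, i.e. that it respects the grading, the multiplication and the differential of $\widetilde\Gamma$. The grading is built into Construction~\ref{con:KY}, where each generator of $\overline{\widetilde Q}$ is assigned a morphism of $X$ of the prescribed degree. For multiplicativity one evaluates $f_?$ on each composable pair of arrows of $\overline{\widetilde Q}$ and compares it with the composite of the assigned morphisms; since every $f_x$ is a left multiplication, this is a bookkeeping computation in the path algebra, organised according to the incidence of the arrows with the mutation vertex $k$. For compatibility with the differential one checks $\diff f_{a^\ast}=f_{\diff a^\ast}=f_{\partial_a\widetilde W}$ for each arrow $a$ and the commutator relation $\diff f_{\sum e_i^\ast}=f_{\sum[a,a^\ast]}$, expanding $\partial_a\widetilde W$ via the decomposition $\widetilde W=\widetilde W_1+\widetilde W_2$ and computing the differential on the Hom-complex from $d_{\widetilde P_k}$. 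The delicate part here is getting all signs right: those coming from the Koszul rule and from the shift $P_k[1]$ are precisely the signs pre-installed in the matrix entries of Construction~\ref{con:KY}.

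Granting that $f_?$ is a dg algebra homomorphism, the bimodule structure is formal and conditions (a), (b) are easy. For (a), $X=\bigoplus_i\widetilde P_i$ is by construction a finite complex of finitely generated projectives, hence perfect. For (b), note $\widetilde P_i=P_i$ for $i\neq k$, so every $P_i$ with $i\neq k$ lies in the thick closure of $X$; in particular so do all $P_{s(\rho)}$ with $t(\rho)=k$, since $Q$ has no loops and hence $s(\rho)\neq k$. Now $\widetilde P_k$ is the cone of the left-multiplication map $(\rho)_\rho\colon P_k\to\bigoplus_\rho P_{s(\rho)}$, giving the triangle
\begin{equation}
P_k \to \bigoplus_{\rho\in Q_1:\,t(\rho)=k} P_{s(\rho)} \to \widetilde P_k \to P_k[1],
\end{equation}
whence $P_k$ too lies in the thick closure of $X$. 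Since the $P_i$ generate $\per\Gamma$, so does $X$.

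The main obstacle is condition (c): that $f_?$ is a quasi-isomorphism onto $\RHom_B(X,X)$, which, $X$ being a complex of projectives, is just the Hom-complex $\bigoplus_{i,j}\Hom_B(\widetilde P_i,\widetilde P_j)$. Equivalently one must show that $\Hom_{\D(\Gamma)}(X,X[n])$ vanishes for $n\neq 0$ and that $f_?$ identifies $H^0(\widetilde\Gamma)$ with $\End_{\D(\Gamma)}(X)$, the mutated (completed) Jacobian algebra. Only the summands involving $\widetilde P_k$ differ from the unmutated situation, and there the differential $d_{\widetilde P_k}$ produces the cancellations that kill the cohomology away from degree $0$ and pin down the degree-$0$ part. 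This cohomology computation---equivalently, the assertion that $\mu^\sharp_k(\Gamma)$ is a silting object of $\per\Gamma$ whose dg endomorphism algebra is quasi-isomorphic to $\widetilde\Gamma$---is the technical heart of \cite{KY}. Once $F=?\lten_A X$ is an equivalence, its inverse is the right adjoint $\RHom_\Gamma(X,?)$, as asserted.
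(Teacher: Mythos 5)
The paper itself offers no proof of this statement: it is imported verbatim from Keller--Yang, cited as \cite[Proposition~3.5 and Theorem~3.2]{KY}. Your outline --- Keller's dg Morita criterion applied to $X=\mu^\sharp_k(\Gamma)$ equipped with the dg algebra map $f_?$, together with the verifications that $X$ is perfect over $\Gamma$, that $X$ generates $\per\Gamma$ (via the triangle $P_k\to\bigoplus_{\rho:t(\rho)=k} P_{s(\rho)}\to\widetilde{P}_k\to P_k[1]$ and the absence of loops at $k$), and that the inverse is the right adjoint $\Rhom_\Gamma(X,?)$ --- is precisely the route taken in \cite{KY}, so you have reconstructed the intended proof rather than found a different one; the dg-homomorphism check you describe as bookkeeping is the content of the paper's Construction~\ref{con:KY} and \cite[Proposition~3.5]{KY}.

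There is, however, one genuine error in your reformulation of condition (c). You assert that $f_?$ being a quasi-isomorphism is ``equivalently'' the statement that $\Hom_{\D(\Gamma)}(X,X[n])=0$ for all $n\neq 0$ together with $f_?$ identifying $H^0(\widetilde{\Gamma})$ with $\End_{\D(\Gamma)}(X)$. This is false, and an attempt to prove that vanishing would fail: $\widetilde{\Gamma}$ is a Ginzburg dg algebra, so while $H^0(\widetilde{\Gamma})$ is the completed Jacobian algebra, $\widetilde{\Gamma}$ has nonzero homology in (generally infinitely many) negative degrees, and correspondingly $\Hom_{\D(\Gamma)}(X,X[n])\neq 0$ for many $n<0$. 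The object $X$ is \emph{silting} in $\per\Gamma$ (no positive-degree self-extensions), not tilting --- which is exactly what your own closing sentence says, in direct contradiction with the ``equivalently'' claim. The correct content of (c) is that $f_?$ induces isomorphisms on homology in \emph{every} degree $n\leq 0$, and that is the computation \cite{KY} actually carry out. Since you ultimately defer this verification to \cite{KY}, the architecture of your argument survives, but the reduction as you stated it is not the right target and should be corrected.
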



Introduced in \cite{DWZ}, the \emph{mutation} $\mu_k(Q,W)$ of $(Q,W)$ at $k$ is obtained from $(\widetilde{Q},\widetilde{W})$ by taking its reduced part $(\widetilde{Q}_{\mbox{red}},\widetilde{W}_{\mbox{red}})$. That is, there is a right-equivalence between $(\widetilde{Q},\widetilde{W})$ and the direct sum of quivers with potential $(\widetilde{Q}_{\mbox{triv}},\widetilde{W}_{\mbox{triv}})\oplus(\widetilde{Q}_{\mbox{red}},\widetilde{W}_{\mbox{red}})$ such that $(\widetilde{Q}_{\mbox{triv}},\widetilde{W}_{\mbox{triv}})$ is trivial (in the sense that its Jacobian algebra is the path algebra of the vertices) and $(\widetilde{Q}_{\mbox{red}},\widetilde{W}_{\mbox{red}})$ is reduced (in the sense that $\widetilde{W}_{\mbox{red}}$ contains no 2-cycles). Here, the direct sum of two quivers with potential is a quiver with potential, whose quiver is the union of arrows in the two quivers and whose potential is the sum of the two potentials. A right equivalence between two quivers with potential is a homomorphism of path algebras of the quivers, which sends the first potential to the second.

In general, the choice of such right-equivalence is not unique. However, for the quiver with potential $(Q_\TT,W_\TT)$ associated to a triangulation $\TT$ of a decorated surface, the right-equivalence can be the identity, which is a canonical choice. This is because any 2-cycle in the potential $\widetilde{W}$ of the pre-mutation $(\widetilde{Q},\widetilde{W})=\widetilde{\mu}_k(Q_\TT,W_\TT)$ contains no common arrows with any other terms in $\widetilde{W}$ (see Case 1 in the proof of \cite[Theorem~30]{LF}). So one can remove all of the 2-cycles from $\widetilde{W}$ and remove the arrows in these 2-cycles from $\widetilde{Q}$ to get the reduced part. This means that the mutation $\mu_k(Q_\TT,W_\TT)$ is a direct summand of the pre-mutation $\widetilde{\mu}_k(Q_\TT,W_\TT)$. Then there is a canonical quasi-isomorphism between $\widetilde{\Gamma}$ and $\Gamma(\mu_k(Q_\TT,W_\TT))$.

Moreover, by \cite[Theorem~30]{LF}, $\mu_k(Q_\TT,W_\TT)$ is the same as $(Q_{\TT'},W_{\TT'})$, where $\TT'=f^\sharp_k(\TT)$ is the forward flip of $\TT$ w.r.t. $k$.
Then we have a canonical quasi-isomorphism between $\widetilde{\Gamma}$ and $\Gamma_{\TT'}$, which makes $\mu^\sharp_k(\Gamma_\TT)$ a $\Gamma_{\TT'}$-$\Gamma_{\TT}$-bimodules. By Theorem~\ref{thm:KY}, we have the following notion.


\begin{definition}[Keller-Yang's equivalence]
Using the above notation, we call the triangle equivalence
\[\kappa_{\TT'}^{\TT}:=?\overset{L}{\otimes}_{\Gamma_{\TT'}}\mu^\sharp_k(\Gamma_\TT):\D(\Gamma_{\TT'})\to \D(\Gamma_\TT).\]
the \emph{Keller-Yang's equivalence} from $\TT$ to $\TT'$.
\end{definition}



\section{Keller-Yang's equivalences on finite-dimensional derived categories}\label{sec:KYsurf}

\subsection{Hearts and spherical objects}\label{sec:bi}
A \emph{bounded t-structure} \cite{BBD}
on a triangulated category $\hua{D}$ is
a full subcategory $\hua{P} \subset \hua{D}$
with $\hua{P}[1] \subset \hua{P}$ such that
\begin{itemize}
	\item if one defines
	\[
	\hua{P}^{\perp}=\{ G\in\hua{D}\mid \Hom_{\hua{D}}(F,G)=0,
	\forall F\in\hua{P}  \},
	\]
	then, for every object $E\in\hua{D}$, there is
	a (unique) triangle $F \to E \to G\to F[1]$ in $\hua{D}$
	with $F\in\hua{P}$ and $G\in\hua{P}^{\perp}$.
	\item
	for every object $M$,
	the shifts $M[k]$ are in $\hua{P}$ for $k\gg0$ and in $\hua{P}^{\perp}$ for $k\ll0$.
\end{itemize}
The \emph{heart} of a bounded t-structure $\hua{P}$ is the full subcategory
\[
\h=  \hua{P}^\perp[1]\cap\hua{P}
\]
and any bounded t-structure is determined by its heart.

Note that any heart of a triangulated category is abelian \cite{BBD}.
Let $(\hua{T},\hua{F})$ be a torsion pair in a heart $\h$, that is, $\Hom_{\h}(\hua{T},\hua{F})=0$ and for any object $X\in\h$, there exists a short exact sequence $0\to T\to X\to F\to 0$ with $T\in\hua{T}$ and $F\in\hua{F}$.
Then there are hearts $\h^\sharp$ and $\h^\flat$, called
forward/backward tiltings of $\h$ with respect to this torsion pair
(in the sense of Happel-Reiten-Smal\o~\cite{HRS}).
In particular, the forward (resp. backward) tilting is simple if
$\hua{F}$ (resp. $\hua{T}$) is generated by a single rigid simple in $\h$.
See \cite[\S~3]{KQ} for details.
The \emph{exchange graph} $\EG(\D)$ of a triangulated category $\hua{D}$ is the oriented graph whose vertices are all hearts in $\hua{D}$ and whose edges correspond
to simple forward tiltings between them.

Let $\TT$ be a triangulation in $\EGp(\surfo)$. Denote by $\EGp(\Gamma_\TT)$ the principal component of
the exchange graph $\EG(\D_{fd}(\Gamma_\TT))$, that is the connected component containing the canonical heart $\h_\TT$.
Denote by
\begin{gather}\label{eq:sph}
\Sph(\Gamma_\TT)=\bigcup_{\h\in\EGp(\Gamma_\TT)}\Sim\h,
\end{gather}
the set of reachable spherical objects (cf. Definition~\ref{def:sph}), where $\Sim\h$ is the set of simple objects in $\h$.
By \cite[Proposition~3.2 and (3.3)]{Q}, there is an isomorphism of oriented graphs
\begin{equation}\label{eq:cong}
\EGp(\surfo)\cong\EGp(\Gamma_\TT)
\end{equation}
which sends $\TT$ to the canonical heart $\h_\TT$. We denote by $\h_\TT^{\TT'}$ the heart corresponding to $\TT'\in\EGp(\surfo)$.

\subsection{Koszul duality}\label{subsec:koszul}

Let $\Gamma$ be the Ginzburg dg algebra associated to a quiver with potential $(Q,W)$. Let $\h$ be a heart obtained from the canonical heart by a sequence of simple tiltings. Denote by $S$ the direct sum of non-isomorphic simples in $\h$. Consider the dg endomorphism algebra
\begin{gather}\label{eq:REnd}
    \ee(S)=\Rhom_{\Gamma}(S, S).
\end{gather}
Since $S$ generates $\D_{fd}(\Gamma)$ (by taking extensions, shifts in both directions and direct summands), by \cite{Kel} (cf. also \cite[Section~8]{Kel2}), we have the following triangle equivalence:
\begin{gather}\label{eq:DE}
\xymatrix@C=4pc{
    \D_{fd}(\Gamma) \ar[rr]^{ \Rhom_{\Gamma}(S, ?) }
          && \per\ee(S),
}\end{gather}

The homology of $\ee(S)$ is the Ext-algebra \[\E(\h):=\Ext^{\mathbb{Z}}_{\D_{fd}(\Gamma)}(S,S)=\bigoplus_{n\in\mathbb{Z}}\Hom_{\D_{fd}(\Gamma)}(S,S[n]).\]
In general, one needs to consider a certain $A_\infty$-structure on $\E(\h)$ (which is induced from the potential $W$, see \cite[Appendix]{K8}) such that it is derived equivalent to $\ee(S)$. However, in the surface case, only ordinary multiplication in the induced $A_\infty$-structure is non-trivial (see \cite[Lemma~A.2]{QZ2}). So we have that for any $\TT,\TT'\in\EGp(\surfo)$, there is a triangle equivalence
\begin{gather}\label{eq:DE2}
\xymatrix@C=4pc{
    \D_{fd}(\Gamma_\TT) \ar[rr]^{ \Ext^\mathbb{Z}_{\Gamma_\T}(S_\TT^{\TT'}, ?)\qquad }
          && \per\E(\h_\TT^{\TT'}),
}\end{gather}
where $S_\TT^{\TT'}$ is the direct sum of non-isomorphic simples in the heart $\h_\TT^{\TT'}$.

\subsection{Keller-Yang's equivalences on simples}

Let $\Gamma$ be the Ginzburg dg algebra associated to a quiver with potential $(Q,W)$. Denote by $S_i$ the simple $\Gamma$-module corresponding to a vertex $i$ of $Q$. There is a short exact sequence of dg $\Gamma$-modules
\[
\xymatrix{0\ar[r] & \ker(\zeta_i)\ar[r] & P_i\ar[r]^{\zeta_i} & S_i\ar[r] & 0,}
\]
where $\zeta_i$ is the canonical projection from $P_i$ to $S_i$ and
\[
\ker(\zeta_i)=\bigoplus_{\alpha:i\to j\in \overline{Q}_1} \alpha P_j
\]
with the induced differential.
Therefore, $S_i$ has a cofibrant resolution (see \cite[Section~2.12]{KY} for definition and properties of this notion) $\mathbf{p}S_i$ with underlying graded vector space
\begin{equation}\label{eq:under}
|\mathbf{p}S_i|=P_i[3]\oplus\bigoplus_{\rho\in Q_1:t(\rho)=i}P^\rho_{s(\rho)}[2]\oplus\bigoplus_{\tau\in Q_1:s(\tau)=i}P^\tau_{t(\tau)}[1]\oplus P_i
\end{equation}
and with the differential
\begin{equation}\label{eq:diff}
d_{\mathbf{p}S_i}=\left(\begin{smallmatrix}
d_{P_i[3]} & 0 & 0 & 0 \\
\rho & d_{P_{s(\rho)}[2]} & 0 & 0\\
-\tau^\ast & -\partial_{\rho\tau}w&d_{P_{t(\tau)}[1]}&0\\
e_i^\ast& \rho^\ast & \tau & d_{P_i}
\end{smallmatrix}\right)
\end{equation}
Note that any morphism from $S_i$ to $S_j$ in $\D_{fd}(\Gamma)$ is induced by a homomorphism of dg $\Gamma$-modules from $\mathbf{p}S_i$ to $S_j$. Hence each arrow in $\overline{Q}_1$ starting at $i$ or the trivial path $e_i$ at $i$ induces a morphism $\pi_\alpha$ in $\D_{fd}(\Gamma)$ starting at $S_i$ as follows
\begin{itemize}
\item $\pi_{e_i}:S_i\to S_i$ is the identity induced by the projection from $P_i$ to $S_i$;
\item $\pi_\tau: S_i\to S_j[1]$ for $\tau:i\to j\in Q_1$ is induced by the projection from $P_{t(\tau)}[1]$ to $S_j[1]$;
\item $\pi_{\rho^\ast}:S_i\to S_j[2]$ for $\rho:j\to i\in Q_1$ is induced by the projection $P_{s(\rho)}[2]$ to $S_j[2]$;
\item $\pi_{e^\ast_i}:S_i\to S_i[3]$ is induced by the projection from $P_i[3]$ to $S_i[3]$.
\end{itemize}

The morphisms $\pi_?$ above can be extended naturally to elements in $\Ext^{\mathbb{Z}}_{\D_{fd}(\Gamma)}(S,S)$. Moreover, they form a basis.


\begin{proposition}\cite[Lemma~2.15 and its proof]{KY}\label{prop:basis}
The morphisms $\pi_\alpha$, where $\alpha$ is a trivial path or an arrow in $\overline{Q}$, form a basis of $\E(\h_\Gamma)$, where $\h_\Gamma$ is the canonical heart.
\end{proposition}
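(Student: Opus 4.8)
The plan is to compute the whole Ext-algebra $\E(\h_\Gamma)=\bigoplus_{i,j}\bigoplus_{n\in\ZZ}\Hom_{\D_{fd}(\Gamma)}(S_i,S_j[n])$ directly from the explicit cofibrant resolution $\mathbf{p}S_i$ recorded in \eqref{eq:under}--\eqref{eq:diff}, and then to match a basis of the answer with the morphisms $\pi_\alpha$. Since $\mathbf{p}S_i$ is cofibrant, one has
\[
\Hom_{\D_{fd}(\Gamma)}(S_i,S_j[n])=H^n\big(\Hom^\bullet_\Gamma(\mathbf{p}S_i,S_j)\big),
\]
so everything reduces to understanding the complex $\Hom^\bullet_\Gamma(\mathbf{p}S_i,S_j)$ together with its differential.

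First I would record the elementary fact that, for the simple right module $S_j$ concentrated at the vertex $j$ in degree $0$, one has $\Hom_\Gamma(P_k,S_j)=\delta_{kj}\,\k$, realised by the canonical projection $\zeta_k\colon P_k\to S_k$ and living in degree $0$. Applying $\Hom_\Gamma(-,S_j)$ to the four graded summands of $|\mathbf{p}S_i|$ then gives, summand by summand, the graded pieces of $\Hom^\bullet_\Gamma(\mathbf{p}S_i,S_j)$:
\[
\begin{array}{l@{\qquad}l@{\qquad}l}
\text{degree }0 & \delta_{ij}\,\k & \text{from }P_i,\\
\text{degree }1 & \k^{\#\{\tau\in Q_1:\, s(\tau)=i,\ t(\tau)=j\}} & \text{from }\textstyle\bigoplus_\tau P^\tau_{t(\tau)}[1],\\
\text{degree }2 & \k^{\#\{\rho\in Q_1:\, t(\rho)=i,\ s(\rho)=j\}} & \text{from }\textstyle\bigoplus_\rho P^\rho_{s(\rho)}[2],\\
\text{degree }3 & \delta_{ij}\,\k & \text{from }P_i[3].
\end{array}
\]
A distinguished spanning set of this graded vector space is given precisely by the projections onto the simple tops of the summands, which by construction are the morphisms $\pi_{e_i}$, $\pi_\tau$, $\pi_{\rho^\ast}$ and $\pi_{e_i^\ast}$ respectively; these are exactly the $\pi_\alpha$ attached to $\alpha$ a trivial path or an arrow of $\overline{Q}$ issuing from $i$.

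The crux is to show that the differential of $\Hom^\bullet_\Gamma(\mathbf{p}S_i,S_j)$ vanishes identically. Since $S_j$ carries the zero differential, this differential is $f\mapsto \pm f\circ d_{\mathbf{p}S_i}$. Every homogeneous $f\colon \mathbf{p}S_i\to S_j$ restricts on each projective summand $P_k$ to a scalar multiple of $\zeta_k$, which annihilates the radical of $P_k$, i.e.\ all paths of positive length. On the other hand, every entry of $d_{\mathbf{p}S_i}$ has image contained in the radical: the connecting maps are left multiplication by the arrows $\rho,\tau,\tau^\ast,\rho^\ast,e_i^\ast$ of $\overline{Q}$ and by $\partial_{\rho\tau}w$, all of positive path length, while the internal differentials $d_{P_k}$ are inherited from $d_\Gamma$, which sends the generators $a^\ast$ and $e_i^\ast$ into the ideal generated by the arrows of $\overline{Q}$ (and kills the $e_i$). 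Hence $f\circ d_{\mathbf{p}S_i}=0$ for every such $f$, so the differential is zero.

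Consequently the cohomology coincides with the complex itself, and the projections $\pi_\alpha$ above form a basis of $\bigoplus_n\Hom_{\D_{fd}(\Gamma)}(S_i,S_j[n])$ for each pair $(i,j)$; summing over $i$ and $j$ yields the asserted basis of $\E(\h_\Gamma)$. The main obstacle I anticipate is not the dimension bookkeeping but the careful verification that $\mathbf{p}S_i$ is a genuine cofibrant resolution of $S_i$ (so that $H^n\Hom^\bullet_\Gamma(\mathbf{p}S_i,S_j)$ really computes the Hom-spaces) and that the vanishing argument is insensitive to the internal differentials $d_{P_k}$; both hinge on the single observation that $d_\Gamma$ strictly raises path length, which is also precisely what confines the Ext-algebra to degrees $0,1,2,3$ in agreement with the $3$-Calabi--Yau duality.
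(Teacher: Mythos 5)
Your proof is correct and is essentially the same argument as the one the paper relies on: the paper gives no independent proof, citing instead \cite[Lemma~2.15 and its proof]{KY}, and that proof is precisely your computation — apply $\Hom_\Gamma(-,S_j)$ to the cofibrant resolution $\mathbf{p}S_i$ of \eqref{eq:under}--\eqref{eq:diff}, observe that every entry of $d_{\mathbf{p}S_i}$ (both the left-multiplication entries and the internal differentials, since $\diff_\Gamma$ lands in the arrow ideal) has image in the radical while any graded $\Gamma$-map to a simple kills the radical, so the Hom-complex carries the zero differential and the projections onto the tops of the four kinds of summands give exactly the basis $\{\pi_\alpha\}$. Your degree bookkeeping and the identification of these projections with $\pi_{e_i}$, $\pi_\tau$, $\pi_{\rho^\ast}$, $\pi_{e_i^\ast}$ match the paper's conventions, so nothing further is needed.
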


Let $(\widetilde{Q},\widetilde{W})$ be the pre-mutation of $(Q,W)$ at a vertex $k$ and $\widetilde{\Gamma}$ the corresponding Ginzburg dg algebra. Let $F:\D(\widetilde{\Gamma})\to \D(\Gamma)$ be the triangle equivalence given in Theorem~\ref{thm:KY}. Denote by $\widetilde{S}_i$ the simple $\widetilde{\Gamma}$-module corresponding to $i\in \widetilde{Q}_0=Q_0$.

\begin{construction}\label{con:sharp}
We define objects $S_i^\sharp$, $i\in Q_0$, in $\D(\Gamma)$ as follows. For $i\neq k$, define $S^\sharp_i$ by the triangle
\[S_i[-1]\xrightarrow{\pi_\rho[-1]}\bigoplus\limits_{\begin{smallmatrix}
\rho\in Q_1\\ s(\rho)=i\\t(\rho)=k
\end{smallmatrix}}S^{\rho}_{t(\rho)}\to S_i^\sharp\to S_i\]
where $S^\rho_k$ is a copy of $S_k$; for $i=k$, define $S^\sharp_k$ to be $S_k[1]$. Note that for a vertex $j\in Q_0$, if there is no arrow from $j$ to $k$ then $S^\sharp_j=S_j$.
\end{construction}

By replacing $P_j$'s by $\widetilde{P}_j$'s in \eqref{eq:under} and \eqref{eq:diff}, we get the cofibrant resolution $\mathbf{p}\widetilde{S}_i$ of $\widetilde{S}_i$. For $i\neq k$, by Construction~\ref{con:KY} (cf. also the proof of \cite[Lemma 3.12]{KY}), we have that $F(\mathbf{p}\widetilde{S}_i)$ has the underlying graded space
\[\begin{array}{cccccccccc}
&P_i[3]
\oplus\bigoplus\limits_{\begin{smallmatrix}
	\alpha\in Q_1\\s(\alpha)\neq k\\t(\alpha)=i
	\end{smallmatrix}}P^\alpha_{s(\alpha)}[2]
\oplus \bigoplus\limits_{\begin{smallmatrix}
	a,b\in Q_1\\t(a)=s(b)=k\\t(b)=i
	\end{smallmatrix}}P^{a,b}_{s(a)}[2]
\oplus\bigoplus\limits_{\begin{smallmatrix}
	c\in Q_1\\ s(c)=i\\ t(c)=k
	\end{smallmatrix}}P^c_k[3]
\oplus\bigoplus\limits_{\begin{smallmatrix}
	p,q\in Q_1\\ s(p)=i\\t(p)=t(q)=k
	\end{smallmatrix}}P^{p,q}_{s(q)}[2]\\
\oplus&\bigoplus\limits_{\begin{smallmatrix}
	\beta\in Q_1\\s(\beta)=i\\t(\beta)\neq k
	\end{smallmatrix}}P^\beta_{t(\beta)}[1]
\oplus\bigoplus\limits_{\begin{smallmatrix}
	l,g\in Q_1\\ s(l)=i\\ t(l)=s(g)=k
	\end{smallmatrix}}P^{l,g}_{t(g)}[1]
\oplus\bigoplus\limits_{\begin{smallmatrix}
	h\in Q_1\\ s(h)=k\\ t(h)=i
	\end{smallmatrix}}P^h_k[2]
\oplus\bigoplus\limits_{\begin{smallmatrix}
	x,y\in Q_1\\ t(x)=s(y)=k\\ t(y)=i
	\end{smallmatrix}}P^{x,y}_{s(x)}[1]
\oplus P_i
\end{array}
\]
with the differential
\[
\left(\begin{smallmatrix}
d_{P_i[3]}&\\
\alpha&d_{P^\alpha_{s(\alpha)}[2]}\\
ab&0&d_{P^{a,b}_{s(a)}[2]}\\
0&0&0&d_{P^c_k[3]}\\
\delta_{p,q}&0&0&\delta_{c,p}q&d_{P^{p,q}_{s(q)}[2]}\\
-\beta^\ast&-\partial_{\alpha\beta}W&-\partial_{ab\beta}W&0&0&d^{\beta}_{P_{t(\beta)}[1]}\\
0&-\partial_{\alpha lg}W&0&-\delta_{c,l}g^\ast&-\delta_{p,l}\partial_{qg}W&0&d_{P^{l,g}_{t(g)}[1]}\\
-h&0&0&0&0&0&0&d_{P^h_k[2]}\\
0&0&-\delta_{a,x}\delta_{b,y}&0&0&0&0&-\delta_{h,y}x&d_{P^{x,y}_{s(x)}[1]}\\
e_i^\ast&\alpha^\ast&0&-c e_k^\ast&-pq^\ast&\beta&lg&-h^\ast&-\partial_{xy}W&d_{P_i}
\end{smallmatrix}\right)\]

On the other hand, as a dg $\Gamma$-module, $S^\sharp_i$ has a cofibrant resolution $\mathbf{p}S^\sharp_i$ whose underlying graded space is
\[\begin{array}{rl}
&P_i[3]
\oplus\bigoplus\limits_{\begin{smallmatrix}
	\widetilde{\alpha}\in Q_1\\s(\widetilde{\alpha})\neq k\\t(\widetilde{\alpha})=i
	\end{smallmatrix}}P^{\widetilde{\alpha}}_{s(\widetilde{\alpha})}[2]
\oplus\bigoplus\limits_{\begin{smallmatrix}
	\widetilde{h}\in Q_1\\ s(\widetilde{h})=k\\ t(\widetilde{h})=i
	\end{smallmatrix}}P^{\widetilde{h}}_{k}[2]
\oplus\bigoplus\limits_{\begin{smallmatrix}
	\widetilde{\beta}\in Q_1\\s(\widetilde{\beta})=i\\t(\widetilde{\beta})\neq k
	\end{smallmatrix}}P^{\widetilde{\beta}}_{t(\widetilde{\beta})}[1]
\oplus\bigoplus\limits_{\begin{smallmatrix}
	\sigma\in Q_1\\s(\sigma)=i\\t(\sigma)=k
	\end{smallmatrix}}P^\sigma_{k}[1]
\oplus P_i\\
\oplus&\bigoplus\limits_{\begin{smallmatrix}
	\widetilde{c}\in Q_1\\ s(\widetilde{c})=i\\ t(\widetilde{c})=k
	\end{smallmatrix}}P^{\widetilde{c}}_k[3]
\oplus\bigoplus\limits_{\begin{smallmatrix}
	\widetilde{p},\widetilde{q}\in Q_1\\ s(\widetilde{p})=i\\t(\widetilde{p})=t(\widetilde{q})=k
	\end{smallmatrix}}P^{\widetilde{p},\widetilde{q}}_{s(\widetilde{q})}[2]
\oplus\bigoplus\limits_{\begin{smallmatrix}
	\widetilde{l},\widetilde{g}\in Q_1\\ s(\widetilde{l})=i\\ t(\widetilde{l})=s(\widetilde{g})=k
	\end{smallmatrix}}P^{\widetilde{l},\widetilde{g}}_{t(\widetilde{g})}[1]
\oplus\bigoplus\limits_{\begin{smallmatrix}
	\tau\in Q_1\\ s(\tau)=i\\ t(\tau)=k
	\end{smallmatrix}}P^\tau_{k}
\end{array}\]
with the differential
\[\left(\begin{smallmatrix}
d_{P_i[3]}\\
\widetilde{\alpha}&d_{P^{\widetilde{\alpha}}_{s(\widetilde{\alpha})}[2]}\\
\widetilde{h}&0&d_{P^{\widetilde{h}}_{k}[2]}\\
-\widetilde{\beta}^\ast&-\partial_{\widetilde{\alpha}\widetilde{\beta}}W&-\partial_{\widetilde{h}\widetilde{\beta}}W&d_{P^{\widetilde{\beta}}_{t(\widetilde{\beta})}[1]}\\
-\sigma^\ast&-\partial_{\widetilde{\alpha}\sigma}W&0&0&d_{P^\sigma_{k}[1]}\\
e_i^\ast&\widetilde{\alpha}^\ast&\widetilde{h}^\ast&\widetilde{\beta}&\sigma&d_{P_i}\\
0&0&0&0&0&0&d_{P_k^{\widetilde{c}}[3]}\\
\delta_{\widetilde{p},\widetilde{q}}&0&0&0&0&0&\delta_{\widetilde{c},\widetilde{p}}\widetilde{q}&d_{P^{\widetilde{p},\widetilde{q}}_{s(\widetilde{q})}[2]}\\
0&\partial_{\widetilde{\alpha}\widetilde{l}\widetilde{g}}W&0&0&0&0&-\delta_{\widetilde{c},\widetilde{l}}\widetilde{g}^\ast&-\delta_{\widetilde{p},\widetilde{l}}\partial_{\widetilde{q}\widetilde{g}}W&d_{P^{\widetilde{l},\widetilde{g}}_{t(\widetilde{g})}[1]}\\
0&0&0&0&\delta_{\sigma,\tau}&0&\delta_{\widetilde{c},\tau}e_k^\ast&\delta_{\widetilde{p},\tau}\widetilde{q}^\ast&\delta_{\widetilde{l},\tau}\widetilde{g}&d_{P_k^\tau}
\end{smallmatrix}\right)
\]

We have a homomorphisms of dg $\Gamma$-modules $\varphi_i:F(\mathbf{p}\widetilde{S}_i)\to \mathbf{p}S_i^\sharp$ as follows.
\[
\varphi_i=\left(\begin{smallmatrix}
1&0&0&0&0&0&0&0&0&0\\
0&\delta_{\alpha,\widetilde{\alpha}}&0&0&0&0&0&0&0&0\\
0&0&0&0&0&0&0&-\delta_{h,\widetilde{h}}&0&0\\
0&0&0&0&0&\delta_{\beta,\widetilde{\beta}}&0&0&-\partial_{xy\widetilde{\beta}}W&0\\
0&0&0&-\delta_{c,\sigma}e_k^\ast&-\delta_{p,\sigma}q^\ast&0&\delta_{l,\sigma}g&0&0&0\\
0&0&0&0&0&0&0&0&0&1\\
0&0&0&\delta_{c,\widetilde{c}}&0&0&0&0&0&0\\
0&0&0&0&\delta_{p,\widetilde{p}}\delta_{q,\widetilde{q}}&0&0&0&0&0\\
0&0&0&0&0&0&-\delta_{l,\widetilde{l}}\delta_{g,\widetilde{g}}&0&0&0\\
0&0&0&0&0&0&0&0&0&0
\end{smallmatrix}\right)
\]

Similarly, for $i=k$, $F(\mathbf{p}\widetilde{S}_k)$ has the underlying graded space
\[P_k[4]
\oplus\bigoplus\limits_{\begin{smallmatrix}
	\rho\in Q_1\\t(\rho)=k
	\end{smallmatrix}}P_{s(\rho)}^\rho[3]
\oplus\bigoplus\limits_{\begin{smallmatrix}
	\gamma\in Q_1\\s(\gamma)=k
	\end{smallmatrix}}P_{t(\gamma)}^\gamma[2]
\oplus\bigoplus\limits_{\begin{smallmatrix}
	w\in Q_1\\t(w)=k
	\end{smallmatrix}}P^w_{s(w)}[1]
\oplus P_k[1]
\oplus\bigoplus\limits_{\begin{smallmatrix}
	z\in Q_1\\ t(z)=k
	\end{smallmatrix}}P_{s(z)}^z
\]
with the differential
\[\left(\begin{smallmatrix}
d_{P_k[4]}\\
-\rho&d_{P^\rho_{s(\rho)}[3]}\\
-\gamma^\ast&-\partial_{\rho\gamma}W&d_{P^\gamma_{t(\gamma)}[2]}\\
w e_k^\ast&w\rho^\ast&-w\gamma&d_{P^w_{s(w)}[1]}\\
-e_k^\ast&-\rho^\ast&\gamma&0&d_{P_k[1]}\\
0&0&0&\delta_{w,z}&z&d_{P^z_{s(z)}}
\end{smallmatrix}\right)
\]
Then there is a homomorphism of dg $\Gamma$-modules $\varphi_k:F(\mathbf{p}\widetilde{S}_k)\to \mathbf{p}S^\sharp_k$, where $\mathbf{p}S^\sharp_k=\mathbf{p}S_k[1]$ has the underlying graded space
\[P_k[4]
\oplus\bigoplus\limits_{\begin{smallmatrix}
	\widetilde{\rho}\in Q_1\\t(\widetilde{\rho})=k
	\end{smallmatrix}}P_{s(\widetilde{\rho})}^{\widetilde{\rho}}[3]
\oplus\bigoplus\limits_{\begin{smallmatrix}
	\widetilde{\gamma}\in Q_1\\s(\widetilde{\gamma})=k
	\end{smallmatrix}}P_{t(\widetilde{\gamma})}^{\widetilde{\gamma}}[2]
\oplus P_k[1]
\]
with the differential
\[\left(\begin{smallmatrix}
d_{P_k[4]}\\
-\widetilde{\rho}&d_{P^{\widetilde{\rho}}_{s(\widetilde{\rho})}[3]}\\
\widetilde{\gamma}^\ast&\partial_{\widetilde{\rho}\widetilde{\gamma}}W&d_{P^{\widetilde{\gamma}}_{t(\widetilde{\gamma})}[2]}\\
-e_k^\ast&-\widetilde{\rho}^\ast&-\widetilde{\gamma}&d_{P_k[1]}
\end{smallmatrix}\right)
\]
and the homomorphism
\[\varphi_k=\left(\begin{smallmatrix}
1&0&0&0&0&0\\
0&\delta_{\rho,\widetilde{\rho}}&0&0&0&0\\
0&0&-\delta_{\gamma,\widetilde{\gamma}}&0&0&0\\
0&0&0&0&1&0
\end{smallmatrix}\right)\]

It is straightforward to check that the above $\varphi_i$, $i\in Q_0$, are quasi-isomorphisms. Hence we have the following result.

\begin{lemma}
There are isomorphisms in $\D(\Gamma)$:
\[F(\widetilde{S}_i)\xrightarrow{\varphi_i} S_i^\sharp.\]
\end{lemma}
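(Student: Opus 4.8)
The plan is to prove the lemma by verifying that each explicit map $\varphi_i\colon F(\mathbf{p}\widetilde{S}_i)\to \mathbf{p}S_i^\sharp$ is a quasi-isomorphism of dg $\Gamma$-modules. This suffices: since $\mathbf{p}\widetilde{S}_i$ is a cofibrant resolution of $\widetilde{S}_i$, the complex $F(\mathbf{p}\widetilde{S}_i)=\mathbf{p}\widetilde{S}_i\lten_{\widetilde{\Gamma}}\mu^\sharp_k(\Gamma)$ represents $F(\widetilde{S}_i)$ in $\D(\Gamma)$, while $\mathbf{p}S_i^\sharp$ represents $S_i^\sharp$; a quasi-isomorphism between two cofibrant models therefore descends to the asserted isomorphism $F(\widetilde{S}_i)\xrightarrow{\sim}S_i^\sharp$ in $\D(\Gamma)$.

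First I would check that $\varphi_i$ is a chain map, i.e. $d_{\mathbf{p}S_i^\sharp}\,\varphi_i=\varphi_i\,d_{F(\mathbf{p}\widetilde{S}_i)}$, by comparing the two block products entry by entry (a $10\times 10$ square for $i\neq k$, a $4\times 6$ one for $i=k$). The diagonal blocks match the internal differentials $d_{P_?}$, while the off-diagonal blocks reduce to the identifications between the arrows of $Q$ appearing in $\widetilde{Q}$ together with the relations among the cyclic derivatives $\partial_? W$ coming from $\diff a^\ast=\partial_a W$ and the Leibniz rule. The only non-obvious entry of $\varphi_i$, namely $-\partial_{xy\widetilde{\beta}}W$ in the block $P^{x,y}_{s(x)}[1]\to P^{\widetilde{\beta}}_{t(\widetilde{\beta})}[1]$, is exactly what is forced in order to intertwine the terms $-\partial_{ab\beta}W$ and $-\partial_{xy}W$ occurring in the two differentials. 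This step is routine, but essentially all of the bookkeeping lives here.

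To see that $\varphi_i$ is a quasi-isomorphism I would show its mapping cone $\Cone(\varphi_i)$ is contractible by cancelling contractible summands; since the cone is a bounded complex of projective $\Gamma$-modules, acyclicity is equivalent to a complete such cancellation. Eight components of $\varphi_i$ are isomorphisms matching summands bijectively (identities up to sign, e.g. $P_i[3]\to P_i[3]$, $P^h_k[2]\to P^{\widetilde{h}}_k[2]$, $P^c_k[3]\to P^{\widetilde{c}}_k[3]$, $P_i\to P_i$). Cancelling these uses up every summand of the source except $\bigoplus P^{a,b}_{s(a)}[2]$ and $\bigoplus P^{x,y}_{s(x)}[1]$, and every summand of the target except $\bigoplus_\sigma P^\sigma_k[1]$ and $\bigoplus_\tau P^\tau_k$. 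Because the column of $\varphi_i$ over $P^{a,b}_{s(a)}[2]$ and its row over $P^\tau_k$ both vanish, these four remaining summands are untouched by the eight cancellations and retain the internal identity links $P^{a,b}_{s(a)}[2]\xrightarrow{\,\id\,}P^{x,y}_{s(x)}[1]$ (from the source differential, the block $-\delta_{a,x}\delta_{b,y}$) and $P^\sigma_k[1]\xrightarrow{\,\id\,}P^\tau_k$ (from the target differential, the block $\delta_{\sigma,\tau}$). Cancelling these last two contractible pairs exhausts the cone, so $\Cone(\varphi_i)\simeq 0$ and $\varphi_i$ is a quasi-isomorphism. The case $i=k$ is identical but shorter: $\varphi_k$ matches $P_k[4]$, $P^{\widetilde{\rho}}[3]$, $P^{\widetilde{\gamma}}[2]$, $P_k[1]$ isomorphically, and the leftover source summands $\bigoplus P^w_{s(w)}[1]$ and $\bigoplus P^z_{s(z)}$, linked by $\delta_{w,z}$ and killed by $\varphi_k$, cancel against each other.

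The main obstacle is the chain-map verification of the second paragraph: tracking all the potential-derivative terms (such as $\partial_{\alpha lg}W$, $\partial_{\rho\gamma}W$ and $\partial_{xy}W$) and their signs across the pre-mutation, and confirming that the non-diagonal entries of $\varphi_i$ are precisely those making every block of $d_{\mathbf{p}S_i^\sharp}\,\varphi_i-\varphi_i\,d_{F(\mathbf{p}\widetilde{S}_i)}$ vanish; here one must use Construction~\ref{con:KY} to know the differential of $F(\mathbf{p}\widetilde{S}_i)$ explicitly. Once the chain-map identity is in place, the quasi-isomorphism statement is purely combinatorial, following from the cancellation of contractible summands described above.
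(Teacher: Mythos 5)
Your proposal is correct and takes essentially the same approach as the paper: the paper sets up exactly these cofibrant resolutions and explicit maps $\varphi_i$, and its entire proof of the lemma is the assertion that it is ``straightforward to check'' the $\varphi_i$ are quasi-isomorphisms --- precisely the check that your chain-map verification plus cone-cancellation argument carries out. One bookkeeping caution: the two vanishing conditions you cite (the column over $P^{a,b}_{s(a)}[2]$ and the row over $P^\tau_k$) do not by themselves exclude all Gaussian-elimination correction terms --- for instance the entry $-\partial_{xy\widetilde{\beta}}W$ in the column over $P^{x,y}_{s(x)}[1]$ does produce a correction into the source copy of $P_i$ when the pair $\bigl(P^\beta_{t(\beta)}[1],\,P^{\widetilde{\beta}}_{t(\widetilde{\beta})}[1]\bigr)$ is cancelled --- but each such correction lands on a summand that is itself subsequently cancelled with no outgoing components to the leftover four, so your conclusion stands.
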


In this subsection, we will further describe the image of morphisms between simples under $F$.


\begin{construction}
For any arrow $\mathfrak{a}:i\to k\in Q_1$ and any arrow $\mathfrak{b}:k\to j\in Q_1$, define
\begin{itemize}
	\item $\pi^\sharp_{\mathfrak{a}'}:S^\sharp_k\to S^\sharp_i[1]$ to be the morphism from $S_k[1]$ to $S^\sharp_i[1]$ given by the identity from $S_k$ to $S^{\mathfrak{a}}_{t(\mathfrak{a})}$;
	\item $\pi^\sharp_{\mathfrak{a}'^\ast}:S^\sharp_i\to S^\sharp_k[2]$ to be the morphism from $S^\sharp_i$ to $S_k[3]$ given by $\pi_{e^\ast_k}:S^{\mathfrak{a}}_{t(\mathfrak{a})}\to S_k[3]$;
	\item $\pi^\sharp_{\mathfrak{b}'}:S_j^\sharp\to S_k^\sharp[1]$ to be $\pi_{b^\ast}:S_j\to S_k[2]$;
	\item $\pi^\sharp_{\mathfrak{b}'^\ast}:S_k^\sharp\to S_j^\sharp[2]$ to be $\pi_{b}[1]:S_k[1]\to S_j[2]$;
	\item $\pi^\sharp_{[\mathfrak{a}\mathfrak{b}]}:S^\sharp_i\to S^\sharp_j[1]$ to be the morphism from $S^\sharp_i$ to $S_j[1]$ given by $\pi_{\mathfrak{b}}:S^{\mathfrak{a}}_{t(\mathfrak{a})}\to S_j[1]$;
	\item $\pi^\sharp_{[\mathfrak{a}\mathfrak{b}]^\ast}:S^\sharp_j\to S^\sharp_i[2]$ to be the morphism from $S_j$ to $S^\sharp_i[2]$ given by $\pi_{\mathfrak{b}^\ast}:S_j\to S^\mathfrak{a}_{t(\mathfrak{a})}[2]$.
\end{itemize}
For any other arrows $\mathfrak{c}$ of $Q$, $\pi^\sharp_{\mathfrak{c}}$ and $\pi^\sharp_{\mathfrak{c}^\ast}$ are given by $\pi_{\mathfrak{c}}$ and $\pi_{\mathfrak{c}^\ast}$, respectively.
\end{construction}

\begin{proposition}\label{prop:simple}
For any arrow $R:s\to t\in \widetilde{Q'}_1$, we have the following commutative diagrams
\[\xymatrix{
	F(S'_s)\ar[d]_{\varphi_s}\ar[r]^{F(\pi_R)}& F(S'_t[1])\ar[d]^{\varphi_t[1]}\\
	S^\sharp_s\ar[r]^{\pi^\sharp_{R}}& S^\sharp_t[1]
}\qquad
\xymatrix{
	F(S_t)\ar[d]_{\varphi_t}\ar[r]^{F(\pi_{R^\ast})}& F(S_s[2])\ar[d]^{\varphi_s[2]}\\
	S^\sharp_t\ar[r]^{\pi^\sharp_{R^\ast}}& S^\sharp_s[2]
}\]
\end{proposition}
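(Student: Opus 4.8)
The plan is to check both squares at the level of the explicit cofibrant resolutions, where every morphism is an honest chain map and commutativity in $\D(\Gamma)$ reduces to commutativity up to $\Gamma$-linear homotopy of chain maps. All four corners of each square are cofibrant dg $\Gamma$-modules: $F(\mathbf{p}\widetilde{S}_s)$ and $F(\mathbf{p}\widetilde{S}_t)[1]$ are the modules displayed above with their explicit differentials, while $\mathbf{p}S^\sharp_s$ and $\mathbf{p}S^\sharp_t[1]$ are likewise displayed, and the vertical maps are the explicit quasi-isomorphisms $\varphi_s$ and $\varphi_t[1]$. So it suffices to produce chain-level representatives of $F(\pi_R)$ and of $\pi^\sharp_R$ and to compare the two matrix products $\varphi_t[1]\circ F(\pi_R)$ and $\pi^\sharp_R\circ\varphi_s$.

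First I would fix a chain representative of $\pi_R$. By the discussion preceding Proposition~\ref{prop:basis}, $\pi_R$ is induced by the homomorphism $\mathbf{p}\widetilde{S}_s\to\widetilde{S}_t[1]$ projecting onto the summand of $|\mathbf{p}\widetilde{S}_s|$ indexed by the arrow $R$; lifting the target to its resolution gives a chain map $\mathbf{p}\widetilde{S}_s\to\mathbf{p}\widetilde{S}_t[1]$. Applying $F=?\otimes_{\widetilde{\Gamma}}\mu^\sharp_k(\Gamma)$, which on these cofibrant modules is the ordinary tensor product computed by Construction~\ref{con:KY}, turns this into the explicit matrix $F(\pi_R)\colon F(\mathbf{p}\widetilde{S}_s)\to F(\mathbf{p}\widetilde{S}_t)[1]$ between the displayed modules. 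On the other side, the construction defining $\pi^\sharp_?$ already expresses each $\pi^\sharp_R$ in terms of the elementary projections $\pi$, so it too has an explicit chain representative.

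Then I would run the comparison by cases on the type of $R$ in $\widetilde{Q'}$. If $R$ is not incident to $k$, then by case (4) of Construction~\ref{con:KY} the functor $F$ acts on the relevant summands as the identity-like map, the vertices satisfy $S^\sharp_s=S_s$ and $S^\sharp_t=S_t$, and $\pi^\sharp_R=\pi_R$, so the square commutes on the nose. If $R$ is a reversed arrow $\mathfrak{a}'\colon k\to i$ or $\mathfrak{b}'\colon j\to k$, or a new arrow $[\mathfrak{a}\mathfrak{b}]\colon i\to j$, I would substitute the corresponding bullet from the construction of $\pi^\sharp_?$ and multiply out. The definitions there are tailored for exactly this: for instance $\pi^\sharp_{\mathfrak{a}'}$ is the identity $S_k\to S^{\mathfrak{a}}_{t(\mathfrak{a})}$, which is precisely the summand of $F(\pi_{\mathfrak{a}'})$ that survives after composing with $\varphi_i$, and $\pi^\sharp_{[\mathfrak{a}\mathfrak{b}]}$ is $\pi_{\mathfrak{b}}$ on $S^{\mathfrak{a}}_{t(\mathfrak{a})}$, matching the $[ab]$-entry produced by Step~1 of the pre-mutation. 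For the starred square I would either repeat the computation with the chain representatives of $\pi_{R^\ast}$, which now hit the degree-shifted and $e^\ast$-summands, or, more economically, invoke the $3$-Calabi-Yau structure: the nondegenerate pairing $\Hom_{\D_{fd}}(S'_s,S'_t[1])\times\Hom_{\D_{fd}}(S'_t,S'_s[2])\to\k$ pairs $\pi_R$ with $\pi_{R^\ast}$, the equivalence $F$ together with the identifications $\varphi$ is compatible with this pairing, and $\pi^\sharp_R$, $\pi^\sharp_{R^\ast}$ form a dual pair by construction, so commutativity of the left square forces that of the right.

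The hard part is the bookkeeping in the two nontrivial cases: the displayed differentials and the matrices $\varphi_i$ carry many Kronecker-delta entries, signs, and potential-derivative terms (such as $\partial_{xy\widetilde{\beta}}W$ in $\varphi_i$ and the $\partial_\bullet W$ entries in the differentials), and these must be propagated consistently through the matrix products. The genuinely delicate point is the interaction of the cubic terms of the original $W$ with the new $3$-cycles $[\mathfrak{a}\mathfrak{b}]\,\mathfrak{b}'\mathfrak{a}'$ of $\widetilde{W}$: it is exactly the cancellation among the $\partial_\bullet W$ entries that makes the two legs coincide rather than merely agree up to a stray potential term, and in the few places where they differ by a boundary one must write down the compensating $\Gamma$-linear nullhomotopy explicitly.
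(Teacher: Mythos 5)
Your proposal follows essentially the same route as the paper's proof: lift everything to the explicit cofibrant resolutions, argue case by case on the type of $R$, and check that the two composites agree up to an explicit $\Gamma$-linear homotopy --- and indeed, in the case $R=\mathfrak{a}'$ the paper finds the difference is a nonzero boundary, killed by an explicit degree $-1$ homotopy $\theta$, exactly the phenomenon you anticipate. The one divergence is your optional Calabi--Yau-duality shortcut for the starred squares, which the paper instead settles by direct matrix computation; if you took that shortcut you would additionally have to justify that Keller--Yang's equivalence $F$ is compatible with the chosen CY trivializations and that $\pi^\sharp_R,\pi^\sharp_{R^\ast}$ are normalized as an exact (not merely up-to-scalar) dual pair, neither of which comes for free.
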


\begin{proof}
We lift the homomorphisms in the diagrams between simples to homomorphisms between their cofibrant resolutions. Then we only need to show that the difference of the two compositions in one diagram is null-homotopic.
\begin{enumerate}
	\item The case $R=\mathfrak{a}'$ for some $\mathfrak{a}:i\to k\in Q_1$. By definition, the morphism $F(\pi_{\mathfrak{a}'})$ is given by the map $\mathfrak{p}\pi_{\mathfrak{a}'}:F(\mathbf{p}S'_k)\to F(\mathbf{p}S'_i[1])$, where
	\[
	\mathfrak{p}\pi_{\mathfrak{a}'}=\left(\begin{smallmatrix}
	0&0&0&0&0&0\\
	0&0&0&0&0&0\\
	0&0&0&0&0&0\\
	\delta_{c,\mathfrak{a}}&0&0&0&0&0\\
	0&\delta_{p,\mathfrak{a}}\delta_{\rho,q}&0&0&0&0\\
	0&0&0&0&0&0\\
	0&0&\delta_{l,\mathfrak{a}}\delta_{\gamma,g}&0&0&0\\
	0&0&0&0&0&0\\
	0&0&0&0&0&0\\
	0&0&0&\delta_{w,\mathfrak{a}}&0&0\\
	\end{smallmatrix}\right)
	\]
	So
	\[\varphi_i\circ \mathfrak{p}\pi_{\mathfrak{a}'}=
	\left(\begin{smallmatrix}
	0&0&0&0&0&0\\
	0&0&0&0&0&0\\
	0&0&0&0&0&0\\
	0&0&0&0&0&0\\
	-\delta_{\mathfrak{a},\sigma}e_k^\ast & -\delta_{\mathfrak{a},\sigma}\rho^\ast & \delta_{\mathfrak{a},\sigma}\gamma&0&0&0\\
	0&0&0&\delta_{\mathfrak{a},w}&0&0\\
	\delta_{\mathfrak{a},\widetilde{c}}&0&0&0&0&0\\
	0&\delta_{\mathfrak{a},\widetilde{p}}\delta_{\rho,\widetilde{q}}&0&0&0&0\\
	0&0&-\delta_{\mathfrak{a},\widetilde{l}}\delta_{\gamma,\widetilde{g}}&0&0&0\\
	0&0&0&0&0&0
	\end{smallmatrix}\right)
	\]
	and
	\[
	\pi^\sharp_{\mathfrak{a}'}\circ\varphi_k=
	\left(\begin{smallmatrix}
	0&0&0&0&0&0\\
	0&0&0&0&0&0\\
	0&0&0&0&0&0\\
	0&0&0&0&0&0\\
	0&0&0 &0&0&0\\
	0&0&0&0&0&0\\
	\delta_{\mathfrak{a},\widetilde{c}}&0&0&0&0&0\\
	0&\delta_{\mathfrak{a},\widetilde{p}}\delta_{\rho,\widetilde{q}}&0&0&0&0\\
	0&0&-\delta_{\mathfrak{a},\widetilde{l}}\delta_{\gamma,\widetilde{g}}&0&0&0\\
	0&0&0&0&\delta_{\mathfrak{a},\tau}&0
	\end{smallmatrix}\right)
	\]
	Then the difference $\varphi_i\circ\mathfrak{p}\pi_{\mathfrak{a}'}-\pi^\sharp_{\mathfrak{a}'}\circ\varphi_k=\theta\circ d+d\circ\theta$, where
	\[\theta=\left(\begin{smallmatrix}
	0&0&0&0&0&0\\
	0&0&0&0&0&0\\
	0&0&0&0&0&0\\
	0&0&0&0&0&0\\
	0&0&0&0&\delta_{\mathfrak{a},\sigma}&0\\
	0&0&0&0&0&\delta_{\mathfrak{a},z}\\
	0&0&0&0&0&0\\
	0&0&0&0&0&0\\
	0&0&0&0&0&0\\
	0&0&0&0&0&0
	\end{smallmatrix}\right)
	\]
	of degree -1, and hence this difference is null-homotopic.
	
	For the second diagram, note that
	\[F(\mathbf{p}\pi_{\mathfrak{a}'^\ast})=\left(\begin{smallmatrix}
	0&0&0&0&0&0&0&0&0&0\\
	0&0&0&0&0&0&0&0&0&0\\
	0&0&0&0&0&0&0&0&0&0\\
	\delta_{\mathfrak{a},w}&0&0&0&0&0&0&0&0&0\\
	0&0&0&\delta_{\mathfrak{a},c}&0&0&0&0&0&0\\
	0&0&0&0&\delta_{\mathfrak{a},p}\delta_{q,z}&0&0&0&0&0
	\end{smallmatrix}\right)
	\]
	and
	\[\mathbf{p}\pi^\sharp_{\mathfrak{a}'^\ast}=\left(\begin{smallmatrix}
	0&0&0&0&0&0&0&0&0&0\\
	0&0&0&0&0&0&0&0&0&0\\
	0&0&0&0&0&0&0&0&0&0\\
	0&0&0&0&0&0&\delta_{\mathfrak{a},\widetilde{c}}&0&0&0
	\end{smallmatrix}\right)
	\]
	So
	\[\varphi_k[2]\circ F(\mathbf{p}\pi_{\mathfrak{a}'^\ast})=\left(\begin{smallmatrix}
	0&0&0&0&0&0&0&0&0&0\\
	0&0&0&0&0&0&0&0&0&0\\
	0&0&0&0&0&0&0&0&0&0\\
	0&0&0&\delta_{\mathfrak{a},c}&0&0&0&0&0&0
	\end{smallmatrix}\right)
	\]
	and
	\[\mathbf{p}\pi^\sharp_{\mathfrak{a}'^\ast}\circ\varphi_i=\left(\begin{smallmatrix}
	0&0&0&0&0&0&0&0&0&0\\
	0&0&0&0&0&0&0&0&0&0\\
	0&0&0&0&0&0&0&0&0&0\\
	0&0&0&\delta_{\mathfrak{a},c}&0&0&0&0&0&0
	\end{smallmatrix}\right)
	\]
	Then the difference is zero.
	\item The case $R=\mathfrak{b}'$ for some $\mathfrak{b}:k\to i\in Q_1$.
	Note that
	\[\mathbf{p}F(\pi_{\mathfrak{b}'})=
	\left(\begin{smallmatrix}
	0&0&0&0&0&0&0&0&0&0\\
	0&0&0&0&0&0&0&0&0&0\\
	\delta_{\mathfrak{b},\gamma}&0&0&0&0&0&0&0&0&0\\
	0&0&\delta_{\mathfrak{b},b}\delta_{a,w}&0&0&0&0&0&0&0\\
	0&0&0&0&0&0&0&\delta_{\mathfrak{b},h}&0&0\\
	0&0&0&0&0&0&0&0&\delta_{\mathfrak{b},x}\delta_{y,z}&0
	\end{smallmatrix}\right)\]
	and
	\[\mathbf{p}\pi^\sharp_{\mathfrak{b}'}=
	\left(\begin{smallmatrix}
	0&0&0&0&0&0&0&0&0&0\\
	0&0&0&0&0&0&0&0&0&0\\
	\delta_{\mathfrak{b},\widetilde{\gamma}}&0&0&0&0&0&0&0&0&0\\
	0&0&\delta_{\mathfrak{b},\widetilde{h}}&0&0&0&0&0&0&0
	\end{smallmatrix}\right)
	\]
	So
	\[\mathbf{p}\varphi_k[1]\circ\mathbf{p}F(\pi_{\mathfrak{b}'})=\left(\begin{smallmatrix}
	0&0&0&0&0&0&0&0&0&0\\
	0&0&0&0&0&0&0&0&0&0\\
	-\delta_{\mathfrak{b},\widetilde{\gamma}}&0&0&0&0&0&0&0&0&0\\
	0&0&0&0&0&0&\delta_{\mathfrak{b},h}&0&0&0
	\end{smallmatrix}\right)\]
	and
	\[\mathbf{p}\pi^\sharp_{\mathfrak{b}'}\circ(\varphi_i)=\left(\begin{smallmatrix}
	0&0&0&0&0&0&0&0&0&0\\
	0&0&0&0&0&0&0&0&0&0\\
	-\delta_{\mathfrak{b},\widetilde{\gamma}}&0&0&0&0&0&0&0&0&0\\
	0&0&0&0&0&0&\delta_{\mathfrak{b},h}&0&0&0
	\end{smallmatrix}\right)\]
	Then the difference $\mathbf{p}\varphi_k[1]\circ\mathbf{p}F(\pi_{\mathfrak{b}'})-\mathbf{p}\pi^\sharp_{\mathfrak{b}'}\circ(\varphi_i)=0$.
	
	For the second diagram, note that
	\[F(\mathbf{p}\pi_{\mathfrak{b}'^\ast})=\left(\begin{smallmatrix}
	0&0&0&0&0&0\\
	0&0&0&0&0&0\\
	0&0&0&0&0&0\\
	0&0&0&0&0&0\\
	0&0&0&0&0&0\\
	0&0&0&0&0&0\\
	0&0&0&0&0&0\\
	\delta_{\mathfrak{b},h}&0&0&0&0&0\\
	0&\delta_{\mathfrak{b},x}\delta_{\rho,y}&0&0&0&0\\
	0&0&\delta_{\mathfrak{b},\gamma}&0&0&0
	\end{smallmatrix}\right)
	\]
	and
	\[\mathbf{p}\pi^\sharp_{\mathfrak{b}'^\ast}=\left(\begin{smallmatrix}
	0&0&0&0\\
	0&0&0&0\\
	\delta_{\mathfrak{b},\widetilde{h}}&0&0&0\\
	0&\partial_{\widetilde{\rho}\mathfrak{b}\widetilde{\beta}}W&0&0\\
	0&0&0&0\\
	0&0&\delta_{\mathfrak{b},\widetilde{\gamma}}0&\\
	0&0&0&0\\
	0&0&0&0\\
	0&0&0&0\\
	0&0&0&0
	\end{smallmatrix}\right)
	\]
	So
	\[(\varphi_i[2])\circ F(\mathbf{p}\pi_{\mathfrak{b}'^\ast})=
	\left(\begin{smallmatrix}
	0&0&0&0&0&0\\
	0&0&0&0&0&0\\
	\delta_{\mathfrak{b},\widetilde{h}}&0&0&0&0&0\\
	0&\partial_{\rho\mathfrak{b}\widetilde{\beta}}W&0&0&0&0\\
	0&0&0&0&0&0\\
	0&0&-\delta_{\mathfrak{b},\gamma}&0&0&0\\
	0&0&0&0&0&0\\
	0&0&0&0&0&0\\
	0&0&0&0&0&0\\
	0&0&0&0&0&0
	\end{smallmatrix}\right)
	\]
	and
	\[\mathbf{p}\pi^\sharp_{\mathfrak{b}'^\ast}\circ\varphi_k=
	\left(\begin{smallmatrix}
	0&0&0&0&0&0\\
	0&0&0&0&0&0\\
	\delta_{\mathfrak{b},\widetilde{h}}&0&0&0&0&0\\
	0&\partial_{\rho\mathfrak{b}\widetilde{\beta}}W&0&0&0&0\\
	0&0&0&0&0&0\\
	0&0&-\delta_{\mathfrak{b},\gamma}&0&0&0\\
	0&0&0&0&0&0\\
	0&0&0&0&0&0\\
	0&0&0&0&0&0\\
	0&0&0&0&0&0
	\end{smallmatrix}\right)
	\]
	Then the difference is zero.
	\item It is more straightforward to calculate the cases $R=[\mathfrak{a}\mathfrak{b}]$, $R\in Q_1$.
\end{enumerate}
\end{proof}

\subsection{Geometric interpretation of Keller-Yang's equivalence}

Let $\surfo$ be a decorated marked surface. Denote by $\cA(\surfo)$ the set of simple closed arcs in $\surfo$. Here, closed arcs mean curves in $\surfo-\Tri$ connecting different decorating points. For an arc $\gamma$ in
a triangulation $\TT\in\EGp(\surfo)$, its dual (w.r.t. $\TT$) is the unique closed arc (up to homotopy) which intersects $\gamma$ once and does not cross any other arcs in $\TT$. The dual of $\TT$, denote by $\TT^*$, is defined to be the set of duals of arcs in $\TT$.

For any oriented closed arc $\eta\in\cA(\surfo)$, there is an associated object $X_\eta=X^\TT_{\eta}$ in $\Sph(\Gamma_\TT)$ constructed in \cite[Construction~A.3]{QZ2}.

\begin{proposition}[{\cite[Theorem~6.6]{QQ}, \cite[Proposition~4.3]{QZ2}}]\label{pp:simple}
	There is a canonical bijection
	\begin{gather}\label{eq:X}
	\widetilde{X}_\TT\colon\cA(\surfo)\to\Sph(\Gamma_\TT)/[1]
	\end{gather}
	sending $\eta$ to $X^\TT_\eta[\mathbb{Z}]$.
	Moreover, this is compatible with the isomorphism \eqref{eq:cong} in the following sense.
	Suppose that $\TT'$ is a triangulation in $\EGp(\surfo)$ with its dual $\TT'^*$. Then $\widetilde{X}_\TT(\TT'^\ast)$ is the set of the shift orbits of simples in $\h_\TT^{\TT'}$.
\end{proposition}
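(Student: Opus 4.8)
The plan is to treat the bijectivity of $\widetilde{X}_\TT$ in \eqref{eq:X} as established in \cite[Theorem~6.6]{QQ} and \cite[Proposition~4.3]{QZ2}, and to concentrate on the compatibility with the graph isomorphism \eqref{eq:cong}, which I would prove by induction on the distance from $\TT$ to $\TT'$ in the oriented graph $\EGp(\surfo)$. For the base case $\TT'=\TT$, I would unwind the construction of $X^\TT_\eta$ in \cite[Construction~A.3]{QZ2}: each open arc $\gamma\in\TT$ has a dual closed arc $\gamma^\ast$ meeting $\gamma$ once and crossing no other arc of $\TT$, so the associated object $X^\TT_{\gamma^\ast}$ is exactly the simple $\Gamma_\TT$-module at the vertex $\gamma$, i.e.\ a simple of the canonical heart $\h_\TT$. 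Thus $\widetilde{X}_\TT(\TT^\ast)$ is precisely the set of shift orbits of the simples of $\h_\TT$, which is the base case.

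For the inductive step I would write $\TT'$ as the forward flip, at the arc corresponding to a vertex $k$, of a triangulation $\TT''$ lying one step closer to $\TT$, and then compare two local transformations. On the algebraic side, under \eqref{eq:cong} the heart $\h_\TT^{\TT'}$ is the simple forward tilt of $\h_\TT^{\TT''}$; its simples are obtained from those of $\h_\TT^{\TT''}$ by the rule recorded in Construction~\ref{con:sharp}, namely the simple at $k$ passes to its shift while each simple admitting an arrow into $k$ is replaced by the corresponding extension $S_i^\sharp$, the remaining simples being unchanged. That this rule really computes the simples of the tilted heart is the content of Proposition~\ref{prop:simple}, which identifies the images of the morphisms $\pi_R$ under Keller-Yang's equivalence. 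On the geometric side, I would describe how the dual triangulation changes under a single flip: the dual $k^\ast$ of the flipped arc is reversed (hence shifted), the duals of the arcs incident to $k$ are modified by concatenation with $k^\ast$ as dictated by the local arrows, and all other duals are left unchanged.

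The crux is to match these two local modifications under the single fixed map $\widetilde{X}_\TT$. Concretely, I would check that the geometric concatenation producing each new dual arc realizes, through $X^\TT_{(-)}$, exactly the triangle defining $S_i^\sharp$ in Construction~\ref{con:sharp}, and that the orientation reversal of $k^\ast$ corresponds to the shift $S_k\mapsto S_k[1]$. Substituting the inductive hypothesis $\widetilde{X}_\TT(\TT''^\ast)=\Sim\h_\TT^{\TT''}[\mathbb{Z}]$ into this correspondence then gives $\widetilde{X}_\TT(\TT'^\ast)=\Sim\h_\TT^{\TT'}[\mathbb{Z}]$, completing the induction. This compatibility also reproves the surjectivity half of \eqref{eq:X}: by \eqref{eq:sph} every reachable spherical object is a simple of some heart $\h_\TT^{\TT'}$ in $\EGp(\Gamma_\TT)$, hence lies in the image of $\widetilde{X}_\TT$ restricted to the dual $\TT'^\ast$; injectivity can be recovered separately from the fact that distinct arcs have distinct geometric intersection data, matching the dimensions of the relevant Hom-spaces.

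I expect the main obstacle to be the case-by-case verification in the inductive step: one must enumerate the local configurations around the flipped arc---arrows into $k$, arrows out of $k$, two-step paths through $k$, and arcs not incident to $k$---and confirm in each case that the geometric modification of the dual arc agrees with the algebraic formula for $S_i^\sharp$, keeping all shifts and orientations consistent. The explicit homotopy computations carried out in Proposition~\ref{prop:simple} are precisely what render this bookkeeping tractable, so the real work lies in setting up the geometric dictionary for the dual arcs so that it lines up term-by-term with those computations.
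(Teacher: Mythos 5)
The paper offers no proof of this proposition: it is imported wholesale from \cite[Theorem~6.6]{QQ} and \cite[Proposition~4.3]{QZ2} (hence the citations in its header), so there is no internal argument to measure your proposal against. Your induction on flip sequences is, in outline, exactly the strategy of those prequels, and it is also how the present paper uses such reasoning downstream (immediately after Lemma~\ref{lem:b} it invokes Proposition~\ref{prop:simple} to conclude $S_\eta\in X^\TT_\eta[\mathbb{Z}]$, and the single-flip inductive step is replayed in Section~\ref{sec:ind}), so the skeleton of your argument is sound. Two points in your write-up need repair, however. First, Proposition~\ref{prop:simple} is not ``the content'' of the claim that the objects $S_i^\sharp$ of Construction~\ref{con:sharp} are the simples of the tilted heart: that proposition only computes the images of the morphisms $\pi_R$ under a single Keller--Yang equivalence based at the \emph{canonical} heart $\h_\TT$. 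What your inductive step actually requires is the universal-extension description of the simples of a simple forward tilt of an \emph{arbitrary} heart $\h_\TT^{\TT''}$ in $\EGp(\Gamma_\TT)$ --- that is \cite[Proposition~5.4]{KQ} --- together with the identification of the Ext-quiver of $\h_\TT^{\TT''}$ with $Q_{\TT''}$ (by the inductive hypothesis plus Lemma~\ref{lem:con}), so that the extension defining each new simple involves the correct number of copies of $S_{\eta_k}$. Second, when matching that universal extension with the object attached to the concatenated dual arc, the safe geometric input is \cite[Proposition~3.1]{QZ2}; be wary of quoting Lemma~\ref{lem:b} (i.e.\ \cite[Theorem~4.5]{QZ2}) at this point, since inside \cite{QZ2} that statement sits downstream of the very Proposition~4.3 you are reconstructing, and invoking it risks circularity.
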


Further, for any two oriented closed arcs $\eta_1,\eta_2\in\cA(\surfo)$ having the same starting point $Z$, the oriented angle $\theta$ (in clockwise direction) at $Z$ from $\eta_1$ to $\eta_2$ induces a morphism
$$
    \varphi^\TT(\eta_1,\eta_2)=\varphi(\eta_1,\eta_2):X_{\eta_1}\to X_{\eta_2},
$$
see \cite[Construction~A.5]{QZ2}. We have two useful lemmas.

\begin{lemma}\cite[Corollary~A.9 and Lemma~3.3]{QZ2}\label{lem:A11} Let ${\eta_i}$, for $i=1,2,3$, be oriented closed arcs, which have the same starting point $Z$ and whose start segments are in clockwise order at $Z$. Then
\[\varphi(\eta_2,\eta_3)\circ\varphi(\eta_1,\eta_2)=\varphi(\eta_1,\eta_3).\]
Moreover, this is the only way such that the composition of two $\varphi(-,-)$'s is not zero.
\end{lemma}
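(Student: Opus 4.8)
The plan is to prove the identity by \emph{localising} at the common endpoint $Z$ and then reducing to the case where the three arcs are dual arcs of a triangulation, where both sides become basis morphisms of an Ext-algebra and the claim is just associativity of an ordinary multiplication. First I would note that, by construction (\cite[Construction~A.5]{QZ2}), $\varphi(\eta_1,\eta_2)$ depends only on the \emph{start segments} of $\eta_1,\eta_2$ at $Z$, i.e.\ on the germs of the two arcs near $Z$ together with the clockwise angle between them. Hence the identity $\varphi(\eta_2,\eta_3)\circ\varphi(\eta_1,\eta_2)=\varphi(\eta_1,\eta_3)$ is a statement about three rays emanating from $Z$ in clockwise order, and one may homotope the arcs away from $Z$ without affecting either side. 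The geometric input is simply that, when the start segments occur in clockwise order, the clockwise angle from $\eta_1$ to $\eta_3$ is the concatenation of the angle from $\eta_1$ to $\eta_2$ with that from $\eta_2$ to $\eta_3$; so at the geometric level the angles add.

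Next I would translate this into algebra. Choosing a triangulation $\TT\in\EGp(\surfo)$ whose dual arcs $\TT^\ast$ realise the relevant directions at $Z$ (after a suitable finite flip sequence arranging $Z$ to sit inside a triangle of $\TT$ whose edges are open arcs), the objects $X^\TT_{\eta_i}$ become shifts of simples of the canonical heart $\h_\TT$ by Proposition~\ref{pp:simple}. Under the Koszul-dual description \eqref{eq:DE2}, the morphisms $\varphi(\eta_i,\eta_j)$ then lie among the basis morphisms $\pi_?$ of $\E(\h_\TT)$ from Proposition~\ref{prop:basis}: a clockwise angle inside a triangle of $\TT$ is an arrow of $Q_\TT$ (a degree-$1$ map $\pi_\tau$), the opposite direction gives a dual arrow ($\pi_{\rho^\ast}$), and a full turn gives a loop ($\pi_{e^\ast}$). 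Because the induced $A_\infty$-structure on $\E(\h_\TT)$ reduces to ordinary associative multiplication in the surface case (\cite[Lemma~A.2]{QZ2}), the composite of the two basis morphisms attached to the consecutive clockwise angles is exactly the basis morphism attached to the total angle, namely $\varphi(\eta_1,\eta_3)$; this settles the base case.

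To pass from the base case to arbitrary $\eta_i$, I would use that both $\eta\mapsto X^\TT_\eta$ and the morphisms $\varphi(-,-)$ are compatible with flips, i.e.\ with Keller--Yang's equivalences $\kappa^{\TT}_{\TT'}$. The compatibility of $\kappa$ with simples and with the basis morphisms, established in Proposition~\ref{prop:simple} through the comparison maps, shows that a flip carries a clockwise-angle morphism to a clockwise-angle morphism and commutes with composition; since any three arcs sharing $Z$ with clockwise start segments can be brought into the base configuration by a finite flip sequence, the identity transports back. For the last assertion, a composite $\varphi(\eta_2,\eta_3)\circ\varphi(\eta_1,\eta_2')$ can be nonzero only when $\eta_2'=\eta_2$ and the start segments of $\eta_1,\eta_2,\eta_3$ occur in clockwise order: in the base case this is read off from the multiplication table of the $\pi_?$, where a product of two basis elements vanishes unless the corresponding angles are consecutive and clockwise (by the shape of $Q_\TT$) and unless the total degree stays $\le 3$, and it transports by the same naturality.

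The main obstacle I anticipate is the reduction in the second and third steps: making precise that $\varphi(\eta_1,\eta_2)$ is genuinely localised at $Z$, so that one may replace the global, possibly long, twisted-complex objects $X^\TT_{\eta_i}$ by the small local model of three simples, and verifying that the flips used to reach the base configuration preserve the clockwise order of the start segments at $Z$ while intertwining the $\varphi$'s with composition. Once this localisation-plus-naturality is secured, the remaining content is the associativity of a single multiplication, which is immediate.
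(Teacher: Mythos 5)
The paper does not prove Lemma~\ref{lem:A11} at all: it is imported verbatim from the prequel \cite{QZ2} (Corollary~A.9 and Lemma~3.3 there), where it is established by direct computation with the explicit twisted-complex models of the objects $X_\eta$ and the explicit components of the morphisms $\varphi(-,-)$ given in Constructions~A.3 and~A.5 of that paper. So your proposal is necessarily a different route. Your base-case observation is reasonable in spirit: for arcs that are dual arcs of one triangulation, the objects are simples of a reachable heart and the claim reduces to the multiplication table of the Ext-algebra, using the vanishing of higher $A_\infty$-products from \cite[Lemma~A.2]{QZ2}. But the overall strategy has gaps that I do not think can be repaired inside this paper's logical framework.

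First, the transport step is circular. Carrying the identity across flips requires knowing that Keller--Yang equivalences send clockwise-angle morphisms to clockwise-angle morphisms; that is precisely Proposition~\ref{prop:KY}, and in this paper Proposition~\ref{prop:KY} is deduced from Lemma~\ref{lem:con}, whose content is exactly Lemma~\ref{lem:A11} combined with Lemma~\ref{lem:b}. Proposition~\ref{prop:simple} alone does not give the transport: to invoke it you must already identify the $\varphi(-,-)$'s between the simples $S_\eta$ with the basis morphisms $\pi_?$, which is again part of the package being proved. Second, the reduction does not cover the stated generality. The lemma carries no non-crossing hypothesis: the three arcs may intersect one another away from $Z$, and such arcs can never simultaneously be dual arcs of a single triangulation (this is why Lemma~\ref{lem:b} assumes non-crossing). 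Flips cannot fix this, because they move the triangulation $\TT$, not the arcs $\eta_i$; no flip sequence changes the mutual configuration of the $\eta_i$, and what your argument actually needs is a triangulation $\TT'$ with $\eta_1,\eta_2,\eta_3\in\TT'^\ast$, which in general does not exist. Third, your localization premise is false: $\varphi(\eta_1,\eta_2)$ is a morphism between objects built from the entire arcs, so neither side of the identity is determined by germs at $Z$; only isotopies are allowed, and this is exactly why the proof in \cite{QZ2} is a global computation rather than an argument about rays. Finally, the ``moreover'' clause quantifies over all compositions of two $\varphi(-,-)$'s, including those where the two angles sit at different endpoints of the middle arc $\eta_2$ (using its two orientations); this configuration never appears in your base case and is not reached by your transport, so even granting the previous steps the vanishing assertion would remain unproved.
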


\begin{lemma}[{\cite[Proposition~3.1 and Theorem~4.5]{QZ2}}]\label{lem:b}
For any two closed arcs $\eta_1,\eta_2$, if they do not cross each other in $\surf-\Tri$, then the morphisms from $X_{\eta_1}$ to $X_{\eta_2}$ of the form $\varphi(-,-)$ form a basis of $\Ext^\mathbb{Z}(X_{\eta_1},X_{\eta_2})$.
\end{lemma}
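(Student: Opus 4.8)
The plan is to split the statement ``the morphisms $\varphi(-,-)$ form a basis of $\Ext^\mathbb{Z}(X_{\eta_1},X_{\eta_2})$'' into a linear independence statement and a dimension count, the two halves corresponding to the two results \cite[Proposition~3.1 and Theorem~4.5]{QZ2} cited. First I would enumerate the candidate morphisms. Orienting $\eta_1$ and $\eta_2$ so that they emanate from a common decorating endpoint $Z$, each clockwise angle at $Z$ from the start segment of $\eta_1$ to that of $\eta_2$ produces one morphism $\varphi(-,-)$ sitting in a definite homological degree; after recording the shifts that relate the two orientations of each arc (encoded by the shift orbits of Proposition~\ref{pp:simple}), this is an element of $\Ext^\mathbb{Z}(X_{\eta_1},X_{\eta_2})$. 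Thus the candidate morphisms are indexed by the decorating points shared by $\eta_1$ and $\eta_2$ together with the angles there, and since the arcs do not cross in the interior these endpoint data are the only possible source.

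For linear independence I would use the composition law of Lemma~\ref{lem:A11}: two morphisms of the form $\varphi$ compose to a single nonzero $\varphi$ exactly when their angles chain up in clockwise order at a common point, and to zero otherwise. Given a vanishing linear combination, this lets me attach to each term $\varphi(-,-)$ a pre- or post-composition that chains with it while breaking the clockwise condition for all the other terms, forcing the corresponding coefficient to vanish. Iterating over the terms yields independence; this is the geometric mechanism behind \cite[Proposition~3.1]{QZ2}.

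The dimension count is where the real work lies. By Proposition~\ref{pp:simple} there is a triangulation $\TT_1\in\EGp(\surfo)$ whose dual $\TT_1^*$ contains $\eta_1$, so that $X_{\eta_1}$ is, up to shift, a simple object $S$ of the heart $\h:=\h_\TT^{\TT_1}$; via the Koszul-dual equivalence \eqref{eq:DE2} this presents $\D_{fd}(\Gamma_\TT)$ as $\per\E(\h)$ with $X_{\eta_1}$ the vertex simple $S$. I would then realize $X_{\eta_2}$ as an iterated cone (twisted complex) built from the simples of $\h$ and read off $\Ext^\mathbb{Z}(S,X_{\eta_2})$ from the resulting spectral sequence, using the explicit basis of $\E(\h)$ from Proposition~\ref{prop:basis} together with the $3$-Calabi--Yau duality of Section~\ref{sec:QP} to organize the contributions according to the intersections of $\eta_1$ and $\eta_2$. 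The non-crossing hypothesis is exactly what guarantees that the interior-intersection contributions, which would otherwise appear in Calabi--Yau-dual pairs, all cancel, so the graded dimension reduces to the number of shared-endpoint angles counted above. Matching this dimension with the independent family of $\varphi(-,-)$'s then gives the basis statement.

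I expect the main obstacle to be precisely this dimension computation: converting the intrinsic $\Ext^\mathbb{Z}$, with the degree shifts induced by orientation reversal and by the Calabi--Yau structure, into the purely combinatorial count of endpoint angles, and in particular establishing \emph{completeness}---that no $\Ext$ class escapes the family $\varphi(-,-)$ and that, for non-crossing arcs, there are genuinely no surviving interior contributions.
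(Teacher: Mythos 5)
You should first note what the paper's ``proof'' of this lemma actually is: there isn't one. The bracketed citation in the lemma header \emph{is} the proof --- the statement is imported wholesale from the prequel \cite{QZ2}, where Proposition~3.1 supplies the morphisms and the smoothing triangles relating $X_{\eta_1}$, $X_{\eta_2}$ and the arc obtained by resolving an angle (this is how the present paper later uses it, e.g.\ the triangle $S'_{\eta_2}\to S'_{\eta_k}\to S'_{\eta'_2}\to S'_{\eta_2}[1]$ in Section~4), and Theorem~4.5 is the dimension formula $\dim_\k\Ext^{\mathbb{Z}}(X_{\eta_1},X_{\eta_2})=2\operatorname{Int}(\eta_1,\eta_2)$, with intersections at decorating points weighted appropriately. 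Your split into ``independence plus dimension count'' does match the logical structure of that pair of citations, and your independence argument via the composition law of Lemma~\ref{lem:A11} is reasonable (though note you misattribute it: Proposition~3.1 of \cite{QZ2} is about the smoothing triangles, not about linear independence).

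The genuine gap is the completeness step, and it is not just that you defer it --- the mechanism you propose for it would fail. You suggest computing $\Ext^{\mathbb{Z}}(S,X_{\eta_2})$ from a twisted-complex presentation and argue that the non-crossing hypothesis makes ``interior-intersection contributions, which would otherwise appear in Calabi--Yau-dual pairs, all cancel.'' Cancellation of dual pairs is an Euler-characteristic phenomenon; it can never bound the \emph{total} graded dimension, which is what a basis statement requires. Indeed, in the 3-CY category interior crossings do not cancel in $\dim\Ext^{\mathbb{Z}}$ --- each one contributes $2$ (that is exactly the content of the formula $\dim=2\operatorname{Int}$ of \cite{QZ2}); under the non-crossing hypothesis these contributions are simply absent, not cancelled. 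So to prove completeness you would have to genuinely compute every graded piece of the Hom space from the twisted-complex differentials, which is the entire difficulty and is not sketched. In \cite{QZ2} this upper bound is obtained quite differently: by induction on the intersection number, using the smoothing triangles of their Proposition~3.1 together with twist-group reductions to the base case where both arcs lie in a dual triangulation $\TT^\ast$, where the Ext-algebra is known explicitly (as in Proposition~\ref{prop:basis} and Lemma~\ref{lem:con} here). Your plan, as written, does not close that gap.
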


We denote by $S_{\eta}$, $\eta\in\TT'^\ast$, the simples in $\h_\TT^{\TT'}$. Then by Proposition~\ref{prop:simple}, we have that $S_\eta\in X^\TT_\eta[\mathbb{Z}]$.
Then by Lemma~\ref{lem:b}, we have

\begin{lemma}\label{lem:con}
$\{\varphi(-,-)\}$ form a basis of the Ext-algebra
\[\E(\h_\TT^{\TT'}):=\Ext^{\mathbb{Z}}\left(\bigoplus\limits_{\eta\in\TT'^\ast}S_\eta,\bigoplus\limits_{\eta\in\TT'^\ast}S_\eta\right).\]
And the multiplication between this basis is given by Lemma~\ref{lem:A11}.
\end{lemma}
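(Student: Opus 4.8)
The plan is to deduce the statement directly from Lemma~\ref{lem:b} together with the identification $S_\eta\in X^\TT_\eta[\mathbb{Z}]$ recorded above, and then read off the multiplication from Lemma~\ref{lem:A11}. First I would note that $\TT'^\ast$, being the dual of a triangulation, is a collection of pairwise non-crossing simple closed arcs in $\surf-\Tri$: two distinct duals meet at most at a common decorating point, and a simple arc does not cross itself. Hence for every ordered pair $\eta_1,\eta_2\in\TT'^\ast$ the hypothesis of Lemma~\ref{lem:b} is satisfied, so the morphisms of the form $\varphi(-,-)$ form a basis of $\Ext^{\mathbb{Z}}(X_{\eta_1},X_{\eta_2})$ (this basis being empty, and the Ext-space zero, exactly when $\eta_1$ and $\eta_2$ share no endpoint).

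Next I would transport this basis along the shifts relating $S_\eta$ and $X^\TT_\eta$. By Proposition~\ref{pp:simple} each $S_\eta$ is a fixed shift $X^\TT_\eta[m_\eta]$, and shifting an object only reindexes the grading of a $\mathbb{Z}$-graded $\Hom$-space; thus $\Ext^{\mathbb{Z}}(S_{\eta_1},S_{\eta_2})\cong\Ext^{\mathbb{Z}}(X_{\eta_1},X_{\eta_2})$ and the $\varphi(-,-)$'s, now viewed as morphisms between the $S_\eta$, still form a basis. Summing over all $\eta_1,\eta_2\in\TT'^\ast$ yields that $\{\varphi(-,-)\}$ is a basis of $\E(\h_\TT^{\TT'})=\bigoplus_{\eta_1,\eta_2}\Ext^{\mathbb{Z}}(S_{\eta_1},S_{\eta_2})$.

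Finally, the multiplication is inherited from Lemma~\ref{lem:A11}: composition in $\E(\h_\TT^{\TT'})$ is composition of morphisms between the $S_\eta$, which under the shift identification is composition of the corresponding morphisms between the $X_\eta$, computed by Lemma~\ref{lem:A11}. The step needing the most care is precisely this compatibility: one must confirm that conjugating by the shifts $[m_\eta]$ does not alter the composition formula. Since $[1]$ is an exact autoequivalence and Lemma~\ref{lem:A11} is an identity of elements of the graded Ext-algebra---asserting $\varphi(\eta_2,\eta_3)\circ\varphi(\eta_1,\eta_2)=\varphi(\eta_1,\eta_3)$ and the vanishing of all other products of two $\varphi$'s---the structure constants are preserved, so the formula transfers verbatim up to at most an overall sign from the shift conventions, which affects neither the basis nor the nonvanishing pattern.
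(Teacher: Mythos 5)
Your proposal is correct and follows essentially the same route as the paper: the paper likewise deduces the lemma by combining the identification $S_\eta\in X^\TT_\eta[\mathbb{Z}]$ (which it cites from Proposition~\ref{prop:simple}, while you invoke Proposition~\ref{pp:simple}; either suffices) with Lemma~\ref{lem:b} applied to the pairwise non-crossing arcs of $\TT'^\ast$, reading off the multiplication from Lemma~\ref{lem:A11}. Your explicit verification of the non-crossing hypothesis and of the shift-compatibility of composition simply fills in details the paper leaves implicit.
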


It follows directly from the construction of $\varphi(-,-)$ and Proposition~\ref{prop:simple} that this basis gives a nice geometric model for Keller-Yang's equivalence.


\begin{proposition}\label{prop:KY}
For any two closed arcs $\eta_i$ and $\eta_j$ in $\TT'^\ast$, which have the same starting point, the image of $\varphi^{\TT'}(\eta_i,\eta_j)$ under the Keller-Yang's equivalence $\kappa_{\TT'}^{\TT}$ is $\varphi^{\TT}(\eta_i,\eta_j)$.
\end{proposition}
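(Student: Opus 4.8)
The plan is to factor the statement through the algebraic comparison already carried out in Proposition~\ref{prop:simple}, using the geometric model to translate the morphisms $\varphi(-,-)$ into the $\pi_?$-morphisms on each side of the equivalence. First I would record that $\kappa_{\TT'}^{\TT}$ carries the canonical heart $\h_{\TT'}$ onto the tilted heart $\h_\TT^{\TT'}$ and the simples $X_\eta^{\TT'}$, for $\eta\in\TT'^\ast$, onto $S_\eta=X_\eta^{\TT}$ up to shift, compatibly with Proposition~\ref{pp:simple}; concretely this is encoded by the isomorphisms $F(\widetilde{S}_s)\xrightarrow{\varphi_s}S_s^\sharp$ together with $S_s^\sharp\in X_\eta^\TT[\ZZ]$. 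Hence $\kappa_{\TT'}^{\TT}$ induces an isomorphism of Ext-algebras $\E(\h_{\TT'})\to\E(\h_\TT^{\TT'})$, and by Lemma~\ref{lem:con} both of these carry the geometric basis $\{\varphi(-,-)\}$ indexed by the clockwise angles between the (same) arcs of $\TT'^\ast$, with the identical multiplication rule of Lemma~\ref{lem:A11}.

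Since $\Ext^\ZZ(X_{\eta_i}^\TT,X_{\eta_j}^\TT)$ is, in each degree, spanned by the single basis element $\varphi^\TT(\eta_i,\eta_j)$ (Lemma~\ref{lem:b}), the equivalence must send $\varphi^{\TT'}(\eta_i,\eta_j)$ to $c_{ij}\,\varphi^\TT(\eta_i,\eta_j)$ for some scalars $c_{ij}$, and multiplicativity together with Lemma~\ref{lem:A11} forces $c_{ij}c_{jk}=c_{ik}$ whenever the start segments of $\eta_i,\eta_j,\eta_k$ are in clockwise order. Thus it suffices to prove $c=1$ on the algebra generators, which are the angles realizing the arrows $R$ of $Q_{\TT'}$ and their duals $R^\ast$ — these are exactly the cases covered by Proposition~\ref{prop:simple}, while the identity morphisms contribute $1$ by functoriality and the remaining top-degree self-extensions $\pi_{e^\ast}$ are then pinned down by the $3$-Calabi–Yau duality that $\kappa_{\TT'}^{\TT}$ respects. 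For a generator I would match $\varphi^{\TT'}(\eta_i,\eta_j)$ with $\pi_R$ on the source side — the geometric realization of $Q_{\TT'}$, whose arrows are by definition the clockwise angles of $\TT'$ and hence correspond to the clockwise angles between the dual arcs $\eta_i,\eta_j$ — and $\pi_R^\sharp$ with $\varphi^\TT(\eta_i,\eta_j)$ on the target side. Since Proposition~\ref{prop:simple} gives $F(\pi_R)=\pi_R^\sharp$ on the nose under $\varphi_s$ and $\varphi_t$, this yields $c=1$ on generators and the proposition follows.

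The main difficulty is the target-side dictionary $\pi_R^\sharp\leftrightarrow\varphi^\TT(\eta_i,\eta_j)$. On the source side the heart is canonical, so identifying $\pi_R$ with the corresponding angle morphism is the standard geometric model of \cite{QZ2}; but $\h_\TT^{\TT'}$ is a non-canonical tilted heart and each $\pi_R^\sharp$ is built combinatorially from the canonical $\pi$-morphisms of $\Gamma_\TT$ through the forward flip, in Construction~\ref{con:sharp} and the construction preceding Proposition~\ref{prop:simple}. I would therefore have to verify, case by case through the explicit formulas for $\pi^\sharp_{\mathfrak{a}'}$, $\pi^\sharp_{\mathfrak{b}'}$, $\pi^\sharp_{[\mathfrak{a}\mathfrak{b}]}$ and their duals, that each is realized by the unique clockwise angle between the corresponding dual arcs of $\TT'$ read inside $\TT$, keeping careful track of the shift coming from $S_k^\sharp=S_k[1]$ and of the signs in Construction~\ref{con:sharp}. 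This is bookkeeping rather than a conceptual obstruction, but it is the real content of the argument; once it is in place, the chain $\varphi^{\TT'}\leftrightarrow\pi_R\xrightarrow{F}\pi_R^\sharp\leftrightarrow\varphi^\TT$ closes up and the proof is complete.
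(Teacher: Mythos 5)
Your proposal is correct and follows essentially the same route as the paper: the paper derives Proposition~\ref{prop:KY} precisely by combining Proposition~\ref{prop:simple} (the on-the-nose identification $F(\pi_R)=\pi_R^\sharp$ on generating morphisms) with the fact that the angle morphisms $\varphi(-,-)$ are built from the $\pi$-morphisms, extended multiplicatively via Lemma~\ref{lem:A11} and Lemma~\ref{lem:con}. The generator-to-angle dictionary you flag as the ``real content'' is exactly what the paper treats as immediate from the constructions of \cite{QZ2}, so your argument is the paper's, stated with more explicit bookkeeping.
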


\section{Intrinsic derived equivalences}\label{sec:KY}

In this section, we will first construct an intrinsic equivalence between the finite dimensional derived categories associated to two triangulations (Construction~\ref{con:iota}). Then we show that this equivalence is naturally isomorphic to the composition of any sequence of Keller-Yang's equivalences which connects these two triangulations (Theorem~\ref{thm:comp}). This gives a proof of Theorem~\ref{thma}.

\subsection{The construction}\label{sec:con}


Fix a triangulation $\TT_0$ in $\EGp(\surfo)$ and let $\Gamma_0=\Gamma_{\TT_0}$. Let $\TT$ be any triangulation in $\EGp(\surf)$. Recall that $\h_0^\TT:=\h_{\TT_0}^\TT$ is the heart in $\D_{fd}(\Gamma_0)$ corresponding to $\TT$, and $\h_\TT$ is the canonical heart in $\D_{fd}(\Gamma_\TT)$.

\begin{construction}\label{con:iota}
By Lemma~\ref{lem:con}, there is an isomorphism between Ext algebras
\[
    \iota_\TT\colon\E(\h_0^\TT)\xrightarrow{\sim}\E(\h_\TT),
\]
which sends $\varphi^{\TT_0}(\eta_1,\eta_2)$ to $\varphi^{\TT}(\eta_1,\eta_2)$ for any $\eta_1,\eta_2\in\TT^\ast$.

\end{construction}

As a result, we have an induced triangle equivalence $\Psi_\TT$ fitting the following commutative diagram of equivalences
\begin{gather}\label{eq:deeq}
\xymatrix{
\D_{fd}(\Gamma_0)\ar@/_2pc/[rrrrrr]_{\Psi_\TT}\ar[rr]^{\Ext_{\Gamma_0}^\mathbb{Z}(S_{\TT_0}^{\TT},-)}&&\per\E(\h_0^\TT)\ar[rr]^{\quad\iota_\TT\quad}&& \per\E(\h_\TT)&&\D_{fd}(\Gamma_\TT)\ar[ll]_{\Ext_{\Gamma_\TT}^\mathbb{Z}(S_{\TT}^{\TT},-)}
}
\end{gather}

Consider a sequence of forward/backward flips
$$p\colon\TT_0\xrightarrow{}\TT_1\xrightarrow{}\cdots
    \xrightarrow{}\TT_m=\TT$$
and the sequence of the associated KY's equivalences
$$\D(\Gamma_{\TT_{0}})\xrightarrow{\;\kappa_{\TT_0}^{\TT_1}\;}\D(\Gamma_{\TT_{1}})
    \xrightarrow{\;\kappa_{\TT_1}^{\TT_2}\;}\cdots\xrightarrow{\;\kappa_{\TT_{m-1}}^{\TT_m}\;}\D(\Gamma_{\TT_{m}})=\D(\Gamma_\TT).$$
Restricted to $\D_{fd}$, we obtain a triangle equivalence
\begin{gather}\label{eq:deeq0}
    \Psi(p)=\kappa_{\TT_{m-1}}^{\TT_m}\circ\cdots\circ\kappa_{\TT_1}^{\TT_2}\circ\kappa_{\TT_0}^{\TT_1}\colon \D_{fd}(\Gamma_0) \xrightarrow{\quad\simeq\quad} \D_{fd}(\Gamma_\TT).
\end{gather}

\begin{theorem}\label{thm:comp}
$\Psi_\TT$ and $\Psi(p)$ are naturally isomorphic to each other (denoted by $\Psi_\TT\sim\Psi(p)$),
for any $\TT\in\EGp(\surfo)$ and any sequence of flips $p\colon\TT_0\to\TT$.
\end{theorem}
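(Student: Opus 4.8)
The plan is to deduce the statement from a rigidity property of the Koszul-duality equivalences of Section~\ref{subsec:koszul}, after checking that $\Psi(p)$ transports the same data as $\Psi_\TT$. Concretely, I would consider the autoequivalence $\Phi=\Psi(p)^{-1}\circ\Psi_\TT$ of $\D_{fd}(\Gamma_0)$ and reduce the theorem to showing $\Phi\sim\id$. By the construction \eqref{eq:deeq} of $\Psi_\TT$ (Construction~\ref{con:iota}), $\Psi_\TT$ sends each simple $S_\eta$, $\eta\in\TT^\ast$, of the heart $\h_0^\TT$ to the canonical simple of $\h_\TT$ indexed by $\eta$ and induces $\iota_\TT$ on Ext-algebras. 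So, once I know that $\Psi(p)$ does exactly the same, $\Phi$ will fix every $S_\eta$ up to isomorphism and act as the identity on $\E(\h_0^\TT)$, and it remains only to argue that such a $\Phi$ is naturally isomorphic to the identity.

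For this last uniqueness step I would prove the rigidity lemma: if an autoequivalence $\Phi$ of $\D_{fd}(\Gamma_0)$ fixes (up to isomorphism) all simples $S_\eta$, $\eta\in\TT^\ast$, of $\h_0^\TT$ and induces the identity on $\E(\h_0^\TT)$, then $\Phi\sim\id$. The argument runs through the triangle equivalence \eqref{eq:DE2}, $\D_{fd}(\Gamma_0)\simeq\per\ee(S_{\TT_0}^\TT)$, under which $\Phi$ becomes an autoequivalence of $\per\ee(S_{\TT_0}^\TT)$ fixing the generator $\ee(S_{\TT_0}^\TT)$ and acting as the identity on its endomorphism dg algebra. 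Here the surface-case formality is decisive: since only the ordinary multiplication of the $A_\infty$-structure on $\E(\h_0^\TT)$ survives (\cite[Lemma~A.2]{QZ2}), the dg endomorphism algebra is quasi-isomorphic to $\E(\h_0^\TT)$, and triviality of $\Phi$ on homology promotes to the identity on the whole dg algebra up to homotopy; any autoequivalence of a $\per$-category fixing the generator and inducing the identity on its endomorphisms is isomorphic to the identity.

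It then remains to show that $\Psi(p)$ realizes $\iota_\TT$. The key geometric input is that each Keller--Yang equivalence $\kappa_{\TT_{i-1}}^{\TT_i}$ in the path is the ``identity on arcs'' not only for the dual $\TT_{i-1}^\ast$ of its source, as provided by Proposition~\ref{prop:KY}, but for all of $\cA(\surfo)$, i.e. $\kappa_{\TT_{i-1}}^{\TT_i}(X_\eta)\cong X_\eta$ and $\kappa_{\TT_{i-1}}^{\TT_i}(\varphi(\eta_1,\eta_2))=\varphi(\eta_1,\eta_2)$ for every arc. To upgrade Proposition~\ref{prop:KY} to this global form I would use that $\kappa_{\TT_{i-1}}^{\TT_i}$ already matches the simples of the canonical heart $\h_{\TT_{i-1}}$ with those of $\h_{\TT_i}^{\TT_{i-1}}$ by arc label, hence fixes the vertex $\TT_{i-1}$ of $\EGp(\surfo)$ together with the arc-labelling of its simples under the isomorphism \eqref{eq:cong}; I would then propagate this through the exchange graph, using that simple tiltings and the bijections $\widetilde{X}$ are compatible with flips (Proposition~\ref{pp:simple}), so that the labelling is matched at every heart, hence on every spherical object and every $\varphi(-,-)$ (these being pinned down by Lemmas~\ref{lem:A11} and \ref{lem:b}). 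Composing along $p$, $\Psi(p)$ sends $X^{\TT_0}_\eta\mapsto X^\TT_\eta$ and $\varphi^{\TT_0}\mapsto\varphi^\TT$ for all $\eta$; restricting to $\eta\in\TT^\ast$ gives precisely $\iota_\TT$, as required.

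I expect the main obstacle to be exactly this global, all-arcs compatibility of a single Keller--Yang equivalence. The duals $\TT_{i-1}^\ast$ and $\TT_i^\ast$ differ precisely at the flipped arc, so Proposition~\ref{prop:KY} by itself never controls the morphisms attached to the newly created dual arc; pinning the functor down there forces one to genuinely exploit the rigidity of the arc-labelled exchange graph and the compatibility of $\widetilde{X}$ with flips. The formality-based rigidity lemma is the other delicate ingredient, but it is of a standard Koszul-duality nature once \cite[Lemma~A.2]{QZ2} is invoked.
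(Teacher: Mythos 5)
Your overall architecture (a Koszul-duality rigidity lemma plus the claim that $\Psi(p)$ realizes $\iota_\TT$) matches the paper's framing: the paper likewise reduces the inductive step to showing that the composite equivalence induces $\iota_{\TT'}$, and the formality input \cite[Lemma~A.2]{QZ2} plays the role of your rigidity lemma. The genuine gap is the step where you upgrade Proposition~\ref{prop:KY} to the statement that a \emph{single} Keller--Yang equivalence preserves $X_\eta$ and $\varphi(\eta_1,\eta_2)$ for \emph{all} arcs in $\cA(\surfo)$. Your propagation through the exchange graph works only at the level of objects: since any triangle equivalence commutes with simple tilting, matching the simples of the source canonical heart by arc label does propagate to all reachable hearts, giving $\kappa(X_\eta)\cong X_\eta$ up to shift for every $\eta$. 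But this does not pin down the action on morphisms. An equivalence fixing every spherical object up to shift may still send $\varphi(\eta_1,\eta_2)$ to $\lambda_{\eta_1\eta_2}\,\varphi(\eta_1,\eta_2)$; the composition rule of Lemma~\ref{lem:A11} only forces the cocycle relations $\lambda_{\eta_2\eta_3}\lambda_{\eta_1\eta_2}=\lambda_{\eta_1\eta_3}$, which admit solutions that are not coboundaries (hence not absorbed by a natural isomorphism), and Lemma~\ref{lem:b} only says the $\varphi$'s form a basis, not that the functor fixes that basis. So the clause ``hence on every spherical object and every $\varphi(-,-)$'' is precisely the unproved point, and proving your global claim directly is essentially as hard as Theorem~\ref{thm:comp} itself --- indeed in the paper it is only available \emph{a posteriori}, as a consequence of the theorem.

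The paper closes this gap by a different mechanism: induction on the length of the flip sequence $p$. At the step $\mu_k\colon\TT\to\TT'$, the Keller--Yang factor is controlled by Proposition~\ref{prop:KY} exactly where it is needed, namely on $\TT'^\ast$ including the newly created dual arcs $\eta'_2,\eta'_4$; all remaining work concerns the \emph{composite} $\Psi_\TT\sim\Psi(p)$ (known by induction to realize $\iota_\TT$), and only on the finitely many new angles involving $\eta'_2$ and $\eta'_4$. These are handled not by labelling considerations but by exactness and functoriality: by \cite[Proposition~3.1]{QZ2} the new simple sits in a triangle $S_{\eta_2}\xrightarrow{\varphi(\eta_2,\eta_k)}S_{\eta_k}\xrightarrow{\varphi(\eta_k,\eta'_2)}S_{\eta'_2}\to S_{\eta_2}[1]$, so preservation of $\varphi(\eta_2,\eta_k)$ (inductive hypothesis) forces preservation of $\varphi(\eta_k,\eta'_2)$; the angle from $\eta_3$ to $\eta'_2$ is then the composition $\varphi(\eta_k,\eta'_2)\circ\varphi(\eta_3,\eta_k)$ by Lemma~\ref{lem:A11}; and angles at the other endpoint of $\eta'_2$ factor through $\eta_2$. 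This triangle-and-composition argument is the missing ingredient your proposal would need in place of the propagation step; without it, the scalar ambiguity above is never excluded.
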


The remaining of this section is devoted to the proof of this theorem. As a result, we can denote the 3-CY category associated to $\surfo$
by $\D_{fd}(\surfo)$.

\subsection{Compatibility/Proof of Theorem~\ref{thm:comp}}\label{sec:ind}
Use induction on the number $m$ of flips in the flip sequence $p$, starting with
the trivial case, when $m=0$ or $\TT_0=\TT$ so that both equivalences are isomorphic to the identity.
Now suppose that $\Psi_\TT\sim\Psi(p)$ for some $p$ and consider a flip
$\mu_k\colon\TT\to\TT'$ and the flip sequence $p'=\mu_k\circ p$.
Without loss of generality, assume $\mu_k$ is a forward flip.
Fix/recall the notations as follows:
\begin{itemize}
\item $\TT=\{\gamma_i\},\TT^*=\{\eta_i\}$ and $\TT'=\{\gamma_i'\},(\TT')^*=\{\eta_i'\}$.
Note that $\gamma_i'=\gamma_i$ for $i\neq k$.
The local pictures of $\TT$ and $\TT'$ are shown in Figure~\ref{fig:WH}
and the local mutation of the corresponding quiver is:
\begin{gather}\label{eq:mutation}
\xymatrix@C=2.3pc@R=2pc{
    & 2 \ar[d]^{c} &&Q_\TT \ar@{=>}[r]^{\mu_k}&Q_{\TT'}&& 2 \ar@{<-}[d]_{c'}\\
    1\ar[ur]^{b} &k\ar[l]^a \ar[r]^e &3\ar[dl]^f   &&&
    1\ar@{<-}[dr]_{[ag]} &k\ar@{<-}[l]_{a'} \ar@{<-}[r]_{e'}&3\ar@{<-}[ul]_{[ec]}\\
    &4\ar[u]^g     &&&&& 4\ar@{<-}[u]_{g'}
}.\end{gather}
Note that $\eta_i$ might not exist for any $1\leq i\leq 4$ and some vertices might coincide.
\begin{figure}[ht]\centering
\begin{tikzpicture}[xscale=-.4,yscale=.425]
    \path (4,3) coordinate (v2)
          (-4,3) coordinate (v1)
          (2,0) coordinate (v3)
          (0,5) coordinate (v4);
  \draw[blue!30!green!30, dashed,very thin] plot [smooth,tension=0] coordinates {(v1)(v3)(v2)};
  \draw[blue!30!green!30, dashed,very thin] plot [smooth,tension=0] coordinates {(v1)(v4)(v2)};
  \foreach \j in {.1, .18, .26, .34, .42, .5,.58, .66, .74, .82, .9}
    {
      \path (v3)--(v4) coordinate[pos=\j] (m0);
      \draw[blue!30!green!30, dashed,very thin] plot [smooth,tension=.3] coordinates {(v1)(m0)(v2)};
    }
    \path (4,-3) coordinate (v1)
          (-4,-3) coordinate (v2)
          (-2,0) coordinate (v3)
          (0,-5) coordinate (v4);
  \draw[blue!30!green!30, dashed,very thin] plot [smooth,tension=0] coordinates {(v1)(v3)(v2)};
  \draw[blue!30!green!30, dashed,very thin] plot [smooth,tension=0] coordinates {(v1)(v4)(v2)};
  \foreach \j in {.1, .18, .26, .34, .42, .5,.58, .66, .74, .82, .9}
    {
      \path (v3)--(v4) coordinate[pos=\j] (m0);
      \draw[blue!30!green!30, dashed,very thin] plot [smooth,tension=.3] coordinates {(v1)(m0)(v2)};
    }
    \path (4,-3) coordinate (v2)
          (-4,3) coordinate (v1)
          (2,0) coordinate (v3)
          (-2,0) coordinate (v4);
  \draw[blue!30!green!30, dashed,very thin] plot [smooth,tension=0] coordinates {(v1)(v3)(v2)};
  \draw[blue!30!green!30, dashed,very thin] plot [smooth,tension=0] coordinates {(v1)(v4)(v2)};
  \foreach \j in {.13,.26,.39,.87,.74,.61}
    {
      \path (v3)--(v4) coordinate[pos=\j] (m0);
      \draw[blue!30!green!30, dashed,very thin] plot [smooth,tension=.3] coordinates {(v1)(m0)(v2)};
    }
    \path (4,3) coordinate (v1)
          (4,-3) coordinate (v2)
          (2,0) coordinate (v3)
          (6,0) coordinate (v4);
  \draw[blue!30!green!30, dashed,very thin] plot [smooth,tension=0] coordinates {(v1)(v3)(v2)};
  \draw[blue!30!green!30, dashed,very thin] plot [smooth,tension=0] coordinates {(v1)(v4)(v2)};
  \foreach \j in {.1,.2,.3,.4,.5,.6,.7,.8,.9}
    {
      \path (v3)--(v4) coordinate[pos=\j] (m0);
      \draw[blue!30!green!30, dashed,very thin] plot [smooth,tension=.3] coordinates {(v1)(m0)(v2)};
    }
    \path (-4,-3) coordinate (v2)
          (-4,3) coordinate (v1)
          (-2,0) coordinate (v3)
          (-6,0) coordinate (v4);
  \draw[blue!30!green!30, dashed,very thin] plot [smooth,tension=0] coordinates {(v1)(v3)(v2)};
  \draw[blue!30!green!30, dashed,very thin] plot [smooth,tension=0] coordinates {(v1)(v4)(v2)};
  \foreach \j in {.1,.2,.3,.4,.5,.6,.7,.8,.9}
    {
      \path (v3)--(v4) coordinate[pos=\j] (m0);
      \draw[blue!30!green!30, dashed,very thin] plot [smooth,tension=.3] coordinates {(v1)(m0)(v2)};
    }
\draw[NavyBlue,thin]
  (4,-3)node{$\bullet$}to(4,3)node{$\bullet$}to(-4,3)node{$\bullet$}to(-4,-3)node{$\bullet$}to(4,-3)to(-4,3);
\draw[red,ultra thick](2,0)to(-2,0);
\draw[red,thick]
  (-6,0)node{$\bullet$}node[white]{\tiny{$\bullet$}}node{\tiny{$\circ$}}to
  (-2,0)node{$\bullet$}node[white]{\tiny{$\bullet$}}node{\tiny{$\circ$}}node[above]{$_Y$}to
  (2,0)node{$\bullet$}node[white]{\tiny{$\bullet$}}node{\tiny{$\circ$}}node[below]{$_Z$}to
  (6,0)node{$\bullet$}node[white]{\tiny{$\bullet$}}node{\tiny{$\circ$}}
  (0,5)node{$\bullet$}node[white]{\tiny{$\bullet$}}node{\tiny{$\circ$}}to(2,0)
  (0,-5)node{$\bullet$}node[white]{\tiny{$\bullet$}}node{\tiny{$\circ$}}to(-2,0)
  (0,0)node[above]{$\eta_k$}(4.5,0)node[above]{$\eta_1$}(-4.5,0)node[above]{$\eta_3$}
  (1.5,3)node[above]{$\eta_2$}(-1.5,-3)node[below]{$\eta_4$};
\end{tikzpicture}\quad
\begin{tikzpicture}[scale=1.3, rotate=0]
\draw[blue,->,>=stealth](3-.6,0)to(3+.6,0);\draw (3,-1.8);
\end{tikzpicture}\quad
\begin{tikzpicture}[xscale=.4,yscale=.425]
    \path (4,3) coordinate (v1)
          (-4,3) coordinate (v2)
          (2,0) coordinate (v3)
          (0,5) coordinate (v4);
  \draw[blue!30!green!30, dashed,very thin] plot [smooth,tension=0] coordinates {(v1)(v3)(v2)};
  \draw[blue!30!green!30, dashed,very thin] plot [smooth,tension=0] coordinates {(v1)(v4)(v2)};
  \foreach \j in {.1, .18, .26, .34, .42, .5,.58, .66, .74, .82, .9}
    {
      \path (v3)--(v4) coordinate[pos=\j] (m0);
      \draw[blue!30!green!30, dashed,very thin] plot [smooth,tension=.3] coordinates {(v1)(m0)(v2)};
    }
    \path (4,-3) coordinate (v1)
          (-4,-3) coordinate (v2)
          (-2,0) coordinate (v3)
          (0,-5) coordinate (v4);
  \draw[blue!30!green!30, dashed,very thin] plot [smooth,tension=0] coordinates {(v1)(v3)(v2)};
  \draw[blue!30!green!30, dashed,very thin] plot [smooth,tension=0] coordinates {(v1)(v4)(v2)};
  \foreach \j in {.1, .18, .26, .34, .42, .5,.58, .66, .74, .82, .9}
    {
      \path (v3)--(v4) coordinate[pos=\j] (m0);
      \draw[blue!30!green!30, dashed,very thin] plot [smooth,tension=.3] coordinates {(v1)(m0)(v2)};
    }
    \path (4,-3) coordinate (v2)
          (-4,3) coordinate (v1)
          (2,0) coordinate (v3)
          (-2,0) coordinate (v4);
  \draw[blue!30!green!30, dashed,very thin] plot [smooth,tension=0] coordinates {(v1)(v3)(v2)};
  \draw[blue!30!green!30, dashed,very thin] plot [smooth,tension=0] coordinates {(v1)(v4)(v2)};
  \foreach \j in {.13,.26,.39,.87,.74,.61}
    {
      \path (v3)--(v4) coordinate[pos=\j] (m0);
      \draw[blue!30!green!30, dashed,very thin] plot [smooth,tension=.3] coordinates {(v1)(m0)(v2)};
    }
    \path (4,3) coordinate (v1)
          (4,-3) coordinate (v2)
          (2,0) coordinate (v3)
          (6,0) coordinate (v4);
  \draw[blue!30!green!30, dashed,very thin] plot [smooth,tension=0] coordinates {(v1)(v3)(v2)};
  \draw[blue!30!green!30, dashed,very thin] plot [smooth,tension=0] coordinates {(v1)(v4)(v2)};
  \foreach \j in {.1,.2,.3,.4,.5,.6,.7,.8,.9}
    {
      \path (v3)--(v4) coordinate[pos=\j] (m0);
      \draw[blue!30!green!30, dashed,very thin] plot [smooth,tension=.3] coordinates {(v1)(m0)(v2)};
    }
    \path (-4,-3) coordinate (v1)
          (-4,3) coordinate (v2)
          (-2,0) coordinate (v3)
          (-6,0) coordinate (v4);
  \draw[blue!30!green!30, dashed,very thin] plot [smooth,tension=0] coordinates {(v1)(v3)(v2)};
  \draw[blue!30!green!30, dashed,very thin] plot [smooth,tension=0] coordinates {(v1)(v4)(v2)};
  \foreach \j in {.1,.2,.3,.4,.5,.6,.7,.8,.9}
    {
      \path (v3)--(v4) coordinate[pos=\j] (m0);
      \draw[blue!30!green!30, dashed,very thin] plot [smooth,tension=.3] coordinates {(v1)(m0)(v2)};
    }
\draw[NavyBlue,thin]
  (4,-3)node{$\bullet$}to(4,3)node{$\bullet$}to(-4,3)node{$\bullet$}to(-4,-3)node{$\bullet$}to(4,-3)to(-4,3);
\draw[red,ultra thick](2,0)to(-2,0);
\draw[red,thick]
  (-6,0)node{$\bullet$}node[white]{\tiny{$\bullet$}}node{\tiny{$\circ$}}to
  (-2,0)node{$\bullet$}node[white]{\tiny{$\bullet$}}node{\tiny{$\circ$}}node[above]{$_Z$}to
  (2,0)node{$\bullet$}node[white]{\tiny{$\bullet$}}node{\tiny{$\circ$}}node[below]{$_Y$}to
  (6,0)node{$\bullet$}node[white]{\tiny{$\bullet$}}node{\tiny{$\circ$}}
  (0,5)node{$\bullet$}node[white]{\tiny{$\bullet$}}node{\tiny{$\circ$}}to(2,0)
  (0,-5)node{$\bullet$}node[white]{\tiny{$\bullet$}}node{\tiny{$\circ$}}to(-2,0)
  (0,0)node[above]{$\eta_k$}(4.5,0)node[above]{$\eta_3$}(-4.5,0)node[above]{$\eta_1$}
  (1.5,3)node[above]{$\eta'_{2}$}(-1.5,-3)node[below]{$\eta'_{4}$};
\end{tikzpicture}
\caption{A forward flip}
\label{fig:WH}
\end{figure}
\item $\h_0^\TT$ and $\h_0^{\TT'}$ are the hearts with simples $\{S_{\eta_i}\}$ and $\{S_{\eta_i'}\}$
in $\D_{fd}(\Gamma_0)$ that correspond to $\TT$ and $\TT'$, respectively.
Note that $\h_0^{\TT'}$ is the forward tilting of $\h_0^{\TT}$ w.r.t. $S_{\eta_k}$.

\item $\h_\TT$ and $\h_{\TT'}$ are the canonical hearts with simples $\{S_{i}\}$ and $\{S_i'\}$
in $\D_{fd}(\Gamma_\TT)$ and $\D_{fd}(\Gamma_{\TT'})$, respectively.

\item Moreover, $\h_\TT^{\TT'}$ is the forward tilting of $\h_\TT$ w.r.t. $S_k$,
with simples $\{S_i^\sharp\}$ in $\D_{fd}(\Gamma_\TT)$ (see Construction~\ref{con:sharp} for the construction of $S_i^\sharp$).
\end{itemize}

Note that we shall prove $\Psi_{\TT'}\sim\Psi(\mu_k\circ p)=\kappa_{\TT}^{\TT'}\circ\Psi(p)$, where the latter is $\kappa_{\TT}^{\TT'}\circ\Psi_{\TT}$ by induction. By definition, it suffices to show that $\kappa_{\TT}^{\TT'}\circ\Psi_{\TT}$ induces the isomorphism $\iota_{\TT'}$, which means that $\kappa_{\TT}^{\TT'}\circ\Psi_{\TT}$ preserves the morphism of the from $\varphi^{\TT'}(-,-)$ induced by any angle in $\TT'^\ast$. By Proposition~\ref{prop:KY}, we have that $\kappa_{\TT}^{\TT'}$ preserves such morphisms. So it suffices to show that so does $\Psi_{\TT}$.

If there is no arrow in $Q_{\TT'}$ from $k$ to $i$, that is, $i\neq 2$ or $4$ in Figure~\ref{fig:WH},
then $\eta'_i=\eta_i$.
Hence we only need to consider the angles between $\eta'_2$ and another arc (similarly for $\eta'_4$).  For the angles to $\eta'_2$ in $\TT'^\ast$, we have the following cases, 
up to dual (i.e. the case starting at $\eta'_2$):
\begin{itemize}
\item For an angle at $Y$ from $\eta_k$ to $\eta'_2$, 
by \cite[Proposition~3.1]{QZ2}, we have a triangle
\[S'_{\eta_2}\xrightarrow{\varphi(\eta_2,\eta_k)} S'_{\eta_k}\xrightarrow{\varphi(\eta_k,\eta'_2)} S'_{\eta'_2}\to S'_{\eta_2}[1].\]
As $\iota_\TT$ (and so $\Phi_\TT$) preserves $\varphi(\eta_2,\eta_k)$, 
we deduce that $\Phi_\TT$ preserves this triangle and hence $\varphi(\eta_k,\eta'_2)$. 

Note that when the number of arrows from $k$ to $2$ in $Q_{\TT'}$ is 2, i.e., the vertices $2$ and $4$ coincide, then we need to add another copy of $S'_{\eta_k}$ to the second term of the triangle. 
However the rest of the deduction and conclusion are the same.
\item
For the angle at $Y$ from $\eta_3$ to $\eta'_2$, by Lemma~\ref{lem:A11}, we have $\varphi(\eta_3,\eta'_2)=\varphi(\eta_k,\eta'_2)\circ\varphi(\eta_3,\eta_k)$. 
As above, $\Phi_\TT$ preserves $\varphi(\eta_k,\eta'_2)$ and $\varphi(\eta_3,\eta_k)$. 
Hence it preserves $\varphi(\eta_3,\eta'_2)$ too.
\item 
For any angle to $\eta'_2$ in $\TT'^\ast$, which is at the other endpoint of $\eta'_2$ from $Y$, it factors through $\eta_2$ (i.e. decomposes).
Again, we can prove it is preserved in the same fashion.
\end{itemize}
\section{An application}\label{sec:app}

\subsection{Calabi-Yau categories and spherical objects}\label{sec:DC}
\begin{definition}\label{def:sph}
A triangulated $\k$-category $\D$ is called \emph{$N$-Calabi-Yau} (or $N$-CY for short)
if for any pair of objects $L,M$ in $\D$, we have a natural isomorphism
\begin{equation}\label{eq:serre}
    \Hom_{\D_{fd}(\Gamma)}(L,M)\cong D\Hom_{\D_{fd}(\Gamma)}(M,L[N])
\end{equation}
where $D=\Hom_\k(-,\k)$.
Further, an object $S$ in a $N$-CY triangulated $\k$-category $\D$ is \emph{($N$-)spherical} if $\Hom_{\D}(S, S[i])=\k$ for $i=0$ or $N$, and $0$ otherwise.

The \emph{twist functor $\phi$ of a spherical object} $S$
is defined by
\begin{gather}\label{eq:phi}
    \phi_S(X)=\Cone\left(S\otimes\Hom^\bullet(S,X)\to X\right)
\end{gather}
with inverse
\[
    \phi_S^{-1}(X)=\Cone\left(X\to S\otimes\Hom^\bullet(X,S)^\vee \right)[-1]
\]
\end{definition}

Recall that $\D_{fd}(\Gamma)$ is the finite-dimensional derived category of $\Gamma$,
for a Ginzburg dg algebra $\Gamma$.
It is well-known that this is a 3-CY category.
We also know that $\D_{fd}(\Gamma)$ admits a canonical heart $\zero$ generated
by simple $\Gamma$-modules $S_i$, for $i\in Q_0$, each of which is 3-spherical.
Denote by $\ST(\Gamma)$ the spherical twist group of $\D_{fd}(\Gamma)$
in $\Aut\D_{fd}(\Gamma)$, generated by $\{\phi_{S_i}\mid i\in Q_0\}$.
Further, the set of reachable spherical objects is
\begin{gather}\label{eq:sph=st}
    \Sph(\Gamma)=\ST(\Gamma)\cdot\Sim\zero,
\end{gather}
which is equivalent to the definition in \eqref{eq:sph} (cf. \cite[Lemma~9.2]{QQ}).

For $\D_{fd}(\surfo)$, we will use notation
$\Sph(\surfo)$ and $\ST(\surfo)$ instead.
Furthermore, by \eqref{eq:cong}, we will not distinguish
$\EGp(\surfo)$ and $\EGp(\Gamma_\TT)$.
\subsection{Stability conditions}\label{sec:sc}
Recall the definition of stability conditions as follows.
\begin{definition}[{\cite[Definition~3.3]{B}}]\label{def:stab}
A \emph{stability condition} $\sigma = (Z,\hua{P})$ on $\hua{D}$ consists of
a group homomorphism $Z:K(\hua{D}) \to \kong{C}$ called the \emph{central charge} and
full additive subcategories $\hua{P}(\varphi) \subset \hua{D}$
for each $\varphi \in \kong{R}$, satisfying the following axioms:
\begin{itemize}
\item if $0 \neq E \in \hua{P}(\varphi)$
then $Z(E) = m(E) \exp(\varphi  \pi \mathbf{i} )$ for some $m(E) \in \kong{R}_{>0}$,
\item for all
$\varphi \in \kong{R}$, $\hua{P}(\varphi+1)=\hua{P}(\varphi)[1]$,
\item if $\varphi_1>\varphi_2$ and $A_i \in \hua{P}(\varphi_i)$
then $\Hom_{\hua{D}}(A_1,A_2)=0$,
\item (HN-property) for each nonzero object $E \in \hua{D}$ there is a finite sequence of real numbers
$$\varphi_1 > \varphi_2 > ... > \varphi_m$$
and a collection of triangles
$$\xymatrix@C=0.8pc@R=1.4pc{
  0=E_0 \ar[rr] && E_1 \ar[dl] \ar[rr] &&   E_2 \ar[dl] \ar[rr] && ... \
  \ar[rr] && E_{m-1} \ar[rr] && E_m=E \ar[dl] \\
  & A_1 \ar@{-->}[ul]  && A_2 \ar@{-->}[ul] &&  && && A_m \ar@{-->}[ul]
},$$
with $A_j \in \hua{P}(\varphi_j)$ for all j.
\end{itemize}
\end{definition}

A crucial result about stability condition is that
they form a complex manifold.

\begin{theorem}[Bridgeland\cite{B}]
All stability conditions on a triangulated category
$\D$ form a complex manifold, denoted by $\Stab(\D)$;
each connected component of $\Stab(\D)$ is locally homeomorphic to a linear sub-manifold of
$\Hom_{\kong{Z}}(K(\D),\kong{C})$, sending a stability condition $(\h, Z)$ to its central change $Z$.
\end{theorem}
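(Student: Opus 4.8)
The plan is to follow Bridgeland's original strategy \cite{B}, whose technical core is a deformation result: a stability condition can be perturbed uniquely along any sufficiently small perturbation of its central charge. First I would record the equivalent description of a stability condition $\sigma=(Z,\hua{P})$ as a \emph{slicing} $\hua{P}$ together with a compatible central charge $Z$, and introduce the generalized metric
\[
d(\hua{P}_1,\hua{P}_2)=\sup_{0\neq E\in\D}\left\{\,\bigl|\varphi^-_{1}(E)-\varphi^-_{2}(E)\bigr|,\ \bigl|\varphi^+_{1}(E)-\varphi^+_{2}(E)\bigr|\,\right\}\in[0,\infty]
\]
on the set of slicings, where $\varphi^{+}_i(E)$ and $\varphi^{-}_i(E)$ denote the largest and smallest phases appearing in the Harder--Narasimhan filtration of $E$ with respect to $\hua{P}_i$. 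Together with the operator norm on the space of central charges, this equips $\Stab(\D)$ with a topology (on the locally finite stability conditions), and one checks directly that the forgetful map $\sigma\mapsto Z$ into $\Hom_{\kong{Z}}(K(\D),\kong{C})$ is continuous.

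The main step is to prove that this map is a local homeomorphism. Fixing $\sigma_0=(Z_0,\hua{P}_0)$ and a small $\varepsilon\in(0,\tfrac14)$, I would show that whenever $W\in\Hom_{\kong{Z}}(K(\D),\kong{C})$ satisfies
\[
\bigl|W(E)-Z_0(E)\bigr|<\sin(\pi\varepsilon)\,\bigl|Z_0(E)\bigr|
\]
for every $\sigma_0$-semistable object $E$, there is a \emph{unique} stability condition $\sigma=(W,\hua{Q})$ with $d(\hua{P}_0,\hua{Q})<\varepsilon$. The slicing $\hua{Q}$ is built by reassembling the old semistable subcategories: each $\sigma_0$-semistable object acquires a well-defined $W$-phase lying within $\varepsilon$ of its previous phase, and one reorganizes the Harder--Narasimhan data accordingly. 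Uniqueness is the easy half, since two slicings at distance $<\varepsilon$ sharing the same central charge must agree phase by phase.

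The hardest part is this deformation lemma, and inside it the verification that $\hua{Q}$ again satisfies the Harder--Narasimhan axiom: one must ensure that reordering the filtration factors by their new phases produces a genuine finite filtration with strictly descending phases, rather than an infinite or ill-ordered chain. This is exactly where the bound with $\sin(\pi\varepsilon)$ enters, as it prevents any semistable object from moving across a full unit of phase and thereby keeps the combinatorics of the filtrations under control; the local finiteness hypothesis guarantees that the chains terminate. Once the deformation lemma is in hand, the local homeomorphism property follows: on a fixed connected component the realized central charges span a linear subspace $V\subset\Hom_{\kong{Z}}(K(\D),\kong{C})$, the map $Z$ restricts to a homeomorphism onto an open subset of $V$, and its inverse is continuous by the uniqueness in the deformation lemma. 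Transporting the complex-linear structure through these charts then endows each connected component of $\Stab(\D)$ with the structure of a complex manifold, which is the assertion of the theorem.
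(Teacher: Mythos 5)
Your proposal is correct, but note that the paper itself gives no proof of this statement: it is quoted verbatim from Bridgeland \cite{B}, and your sketch faithfully reproduces Bridgeland's original argument (the metric on slicings, the deformation theorem with the $\sin(\pi\varepsilon)$ bound, uniqueness of the deformed slicing, and transport of the complex structure through the linear charts). So you have, in effect, supplied the proof the paper delegates to its reference, including the local-finiteness hypothesis that the paper's statement elides.
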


We will study the pricipal component $\Stap(\surfo)$ of
the space of stability conditions on $\D_{fd}(\surfo)$,
that is the connected component containing stability conditions whose hearts are in
$\EGp(\surfo)$.

\subsection{Faithful actions}\label{sec:ff}
\begin{lemma}
An auto-equivalence $\varphi\in\Aut\D_{fd}(\surfo)$ acts trivially on $\Stap(\surfo)$
if and only if
it acts trivially on $\Sph(\surfo)$.
\end{lemma}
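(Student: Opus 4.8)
The statement is a bi-implication, so I would prove the two directions separately, noting that one direction is essentially formal. The whole argument rests on understanding how an auto-equivalence of $\D_{fd}(\surfo)$ interacts with the two structures in play: the principal component $\Stap(\surfo)$ of the stability space, and the set $\Sph(\surfo)$ of reachable spherical objects, which by \eqref{eq:sph=st} is the $\ST(\surfo)$-orbit of the simples of the canonical heart.

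\textbf{The easy direction.}
Suppose $\varphi$ acts trivially on $\Stap(\surfo)$. I would recall that a stability condition $\sigma=(Z,\hua{P})$ fixed by $\varphi$ forces $\varphi$ to preserve its heart and its central charge. Ranging over stability conditions whose hearts lie in $\EGp(\surfo)$ (these are dense enough to detect all reachable hearts, since each such heart arises as the heart of some $\sigma\in\Stap(\surfo)$), trivial action on $\Stap(\surfo)$ implies $\varphi$ fixes every heart $\h\in\EGp(\surfo)$ together with its simple objects up to the data recorded by $Z$. In particular $\varphi$ permutes $\Sim\h$ for each such heart, and the stability data pins the permutation down to the identity on objects (an object and its image have the same central charge and phase and lie in the same heart, hence are isomorphic by the finiteness of simples of the same class). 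Since $\Sph(\surfo)=\bigcup_{\h\in\EGp(\surfo)}\Sim\h$ by \eqref{eq:sph}, this shows $\varphi$ acts trivially on $\Sph(\surfo)$.

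\textbf{The harder direction.}
Conversely, suppose $\varphi$ fixes every reachable spherical object. The goal is to conclude $\varphi$ fixes every $\sigma\in\Stap(\surfo)$. The key point is that a stability condition in the principal component is determined by its restriction to the spherical objects: for $\sigma$ with heart $\h\in\EGp(\surfo)$, the simples $\Sim\h\subset\Sph(\surfo)$ are fixed by $\varphi$, so $\varphi$ preserves $\h$ and acts as the identity on its simples, hence on all of $\h$ by taking extensions; and the central charge $Z$ is determined on $K(\D_{fd}(\surfo))$ by its values on the classes $[S]$, $S\in\Sim\h$, which $\varphi$ fixes. Therefore $\varphi^*\sigma$ and $\sigma$ have the same heart and the same central charge, so $\varphi^*\sigma=\sigma$. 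Since this holds for a set of $\sigma$ whose hearts exhaust $\EGp(\surfo)$, and since $\varphi^*$ is a continuous (indeed holomorphic) automorphism of the complex manifold $\Stap(\surfo)$ fixing such a spanning set of points, I would upgrade this to triviality on the whole component.

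\textbf{The main obstacle.}
The delicate step is this last upgrade: knowing $\varphi^*$ fixes all stability conditions whose hearts lie in $\EGp(\surfo)$ does not a priori fix the stability conditions \emph{between} such chambers, where the heart is a tilt or a limit not of finite-length type. The cleanest route is to observe that the stability conditions with hearts in $\EGp(\surfo)$ form a subset whose closure (or whose union of closures of the associated cells, in the sense of Bridgeland's cell decomposition of $\Stap$) covers the whole principal component, so that $\varphi^*=\id$ on a dense subset of the manifold forces $\varphi^*=\id$ by continuity. Making this covering/density claim precise — that the union of the finite-length-heart cells is dense in $\Stap(\surfo)$, or at least that every point is approached by such cells — is where the geometric input from the surface model and the structure of $\EGp(\surfo)$ is genuinely needed, and I expect this to be the technical heart of the argument.
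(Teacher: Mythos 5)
Your proposal follows essentially the same route as the paper's proof: using \cite[Proposition~5.3]{B} to identify points of $\Stap(\surfo)$ with pairs (heart, stability function), it reduces trivial action on $\Stap(\surfo)$ to trivial action on the vertices and edges of $\EGp(\surfo)$, hence on $\bigcup_{\h\in\EGp(\surfo)}\Sim\h$, which is $\Sph(\surfo)$ by \eqref{eq:sph}. The ``main obstacle'' you flag is not treated as an obstacle in the paper: that the cells $U(\h)$ with $\h\in\EGp(\surfo)$ (together with their closures) exhaust the component is absorbed into the very definition of the principal component and the standard cell structure of exchange-graph components from \cite{KQ}, which is exactly the background the paper's terse chain of equivalences silently invokes.
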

\begin{proof}
As to give a stability condition is equivalent to give a heart $\mathcal{H}$
with a stability function $Z$ on $\mathcal{H}$ satisfying the HN-property in Definition~\ref{def:stab} (see \cite[Proposition~5.3]{B}),
we have the following equivalences
\begin{itemize}
  \item  $\varphi$ acts trivially on $\Stap(\surfo)$;
  \item $\varphi$ acts trivially on the exchange graph $\EGp(\surfo)$;
  \item $\varphi$ acts trivially on any vertices of $\EGp(\surfo)$
  and any edges of $\EGp(\surfo)$.
\end{itemize}
As a heart in $\EGp(\surfo)$ is determined by its simples and the edges of $\EGp(\surfo)$
are labeled by simple of hearts,
we deduce that $\varphi$ acts on $\Stap(\surfo)$ if and only if it acts trivially on the set
\[ \bigcup_{\mathcal{H}\in\EGp(\surfo)}\Sim\mathcal{H}.\]
This is $\Sph(\surfo)$ by \eqref{eq:sph}.
\end{proof}

\begin{theorem}\label{thmbb}
The spherical twist group $\ST(\surfo)$ acts faithfully on $\Stap(\surfo)$.
\end{theorem}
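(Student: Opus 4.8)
The plan is to combine the preceding Lemma with the geometric dictionary between spherical objects and closed arcs supplied by the prequels. By that Lemma, an autoequivalence in $\Aut\D_{fd}(\surfo)$ acts trivially on $\Stap(\surfo)$ if and only if it acts trivially on $\Sph(\surfo)$. Hence it suffices to prove that $\ST(\surfo)$ acts faithfully on the set of reachable spherical objects: if $g\in\ST(\surfo)$ satisfies $g(X)\cong X$ for every $X\in\Sph(\surfo)$, then $g=\id$. This reduction is the only place where the stability-space structure is used; everything else takes place at the level of the set $\Sph(\surfo)$.

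First I would pass from objects to arcs. By Proposition~\ref{pp:simple}, for each triangulation $\TT\in\EGp(\surfo)$ the map $\widetilde{X}_\TT$ is a bijection $\cA(\surfo)\to\Sph(\Gamma_\TT)/[1]$, and Theorem~\ref{thm:comp} guarantees that these bijections are intertwined by the canonical equivalences identifying all the $\D_{fd}(\Gamma_\TT)$ with $\D_{fd}(\surfo)$. Consequently the $\ST(\surfo)$-action on $\Sph(\surfo)$ descends to a well-defined action on $\Sph(\surfo)/[1]\cong\cA(\surfo)$. Using \cite{QQ,QZ2}, the generators $\phi_{S_i}$ of $\ST(\surfo)$ correspond to the braid twists along the dual arcs, so this action is the restriction to $\cA(\surfo)$ of the action of the braid twist group $\BT(\surfo)\le\MCG(\surfo)$; in particular there is a homomorphism $\ST(\surfo)\to\MCG(\surfo)$ intertwining the two actions. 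The faithfulness then follows from surface topology: if $g$ fixes every reachable spherical object up to shift, the associated mapping class $\bar g$ fixes the isotopy class of every closed arc in $\cA(\surfo)$, and by the Alexander method a mapping class fixing a filling collection of arcs (here, all of them) is isotopic to the identity, so $\bar g=\id$ and hence $g=\id$.

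I expect the main obstacle to lie in the bookkeeping of this transfer rather than in any single hard estimate. One must check that the $\ST(\surfo)$-action genuinely descends modulo shift, so that ``fixed up to shift'' is the correct notion to match ``fixes an arc''; that the homomorphism $\ST(\surfo)\to\MCG(\surfo)$ is injective on the nose, which is exactly where the isomorphism $\ST(\surfo)\cong\BT(\surfo)$ from \cite{QQ} does the real work; and that recovering ``$g=\id$'' from ``$\bar g=\id$'' is not obstructed by a shift or grading ambiguity. It is precisely the canonical identification of Theorem~\ref{thma} that makes all of these statements refer to a \emph{single} group acting on a \emph{single} set $\cA(\surfo)$, independent of the auxiliary triangulation; without it the very phrase ``$\ST(\surfo)$ acts on $\Sph(\surfo)$'' would not be well-posed, and the passage to the surface could not be carried out coherently.
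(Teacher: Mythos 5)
Your first step---the reduction via the preceding Lemma to the action on $\Sph(\surfo)$---matches the paper, but the rest of your argument is circular. The injectivity you need at the very end, passing from ``$\bar g=\id$ in $\MCG(\surfo)$'' back to ``$g\cong\id$ in $\Aut\D_{fd}(\surfo)$'', is exactly the statement being proven. What \cite{QQ} actually provides is only the quotient isomorphism $\ST(\surfo)/\Aut_0\cong\BT(\surfo)$, where $\Aut_0$ is the subgroup of autoequivalences acting trivially on $\Stap(\surfo)$; hence the kernel of your homomorphism $\ST(\surfo)\to\BT(\surfo)\le\MCG(\surfo)$ is precisely $\ST(\surfo)\cap\Aut_0$, i.e.\ exactly the elements whose triviality is in question. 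So the Alexander-method step establishes nothing beyond what the hypothesis already gives (an element acting trivially on $\Sph(\surfo)$ maps to the identity mapping class by construction), and the full isomorphism $\ST(\surfo)=\BT(\surfo)$ that you invoke to ``do the real work'' of injectivity is not available from \cite{QQ}: in this paper it is the \emph{Corollary} deduced from Theorem~\ref{thmbb}, not an input to it.

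The missing idea is how to rule out a functor that fixes every reachable spherical object, every heart and every stability condition, yet is not naturally isomorphic to the identity; no amount of tracking the action on arcs can detect such an element, since by assumption it acts trivially on arcs. This is where the paper uses its main theorem. Given $\phi\in\ST(\surfo)$, one first notes (via \cite[Corollary~8.5]{KQ}) that $\phi(\h_\TT)=\h_{\TT'}$ for some triangulation $\TT'$ reachable by flips, so $\phi$ is realized as a composition of Keller--Yang equivalences along a flip sequence; Theorem~\ref{thm:comp} then pins down $\phi$ \emph{as a functor}, up to natural isomorphism, by the pair $(\h_\TT,\h_{\TT'})$ alone. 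If $\phi$ acts trivially, then $\TT'=\TT$ and the intrinsic equivalence of Construction~\ref{con:iota} is the identity, whence $\phi\cong\id$. In other words, Theorem~\ref{thm:comp} is the engine of the injectivity, not (as in your sketch) mere bookkeeping ensuring that the action is well posed.
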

\begin{proof}
Choose any $\h_{\TT}\in\EGp(\surfo)$ that corresponds to a triangulation $\TT$.
Let $\phi\in\ST(\surfo)$. By \cite[Corollary~8.5]{KQ},
$\phi(\h_\TT)$ can be obtained from $\h_\TT$ by a sequence of tiltings.
Hence $\phi(\h_\TT)=\h_{\TT'}$ for some $\TT'$, which is obtained from $\TT$
by the corresponding sequence of flips.
Hence, $\phi$ can be realized as the composition of a sequence of KY equivalences.
By Theorem~\ref{thm:comp}, $\phi$ can be determined by $\h_\TT$ and $\h_{\TT'}$ directly.

In the case that $\phi$ acts trivially on $\Stap(\surfo)$ or $\Sph(\surfo)$,
we have $\phi(\h_{\TT})=\h_{\TT'}$ and the corresponding equivalence from Construction~\ref{con:iota}
is the identity.
Thus, $\phi$ is naturally isomorphic to the identity as required.
\end{proof}

In \cite{QQ}, we have $\ST(\surfo)/\Aut_0\cong\BT(\surfo)$, where $\BT(\surfo)$ is the braid twist group of $\surfo$, and
where $\Aut_0$ is the part of $\Aut\D_{fd}(\surfo)$ that acts trivially on $\Stap\D_{fd}(\surfo)$.
Hence a consequence of the theorem above is the following.

\begin{corollary}
$\ST(\surfo)=\BT(\surfo)$.
\end{corollary}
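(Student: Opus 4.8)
Let me sketch how I would prove this.

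The plan is to combine the faithful action result (Theorem~\ref{thmbb}) with the known relationship between $\ST(\surfo)$ and $\BT(\surfo)$ recalled from the prequel \cite{QQ}. The starting point is the isomorphism $\ST(\surfo)/\Aut_0 \cong \BT(\surfo)$ stated just before the corollary, where $\Aut_0$ denotes the subgroup of $\Aut\D_{fd}(\surfo)$ acting trivially on $\Stap\D_{fd}(\surfo)$. To deduce the corollary, I would first intersect this description with $\ST(\surfo)$ itself: writing $\Aut_0 \cap \ST(\surfo)$ for the kernel of the action of $\ST(\surfo)$ on $\Stap(\surfo)$, the isomorphism restricts to $\ST(\surfo)/(\Aut_0 \cap \ST(\surfo)) \cong \BT(\surfo)$.

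The key step is then to observe that Theorem~\ref{thmbb} asserts precisely that $\ST(\surfo)$ acts \emph{faithfully} on $\Stap(\surfo)$. By definition of faithfulness, the kernel of this action is trivial, so $\Aut_0 \cap \ST(\surfo) = \{\id\}$. Substituting this into the restricted isomorphism above collapses the quotient, yielding $\ST(\surfo) \cong \BT(\surfo)$ as groups. Since both $\ST(\surfo)$ and $\BT(\surfo)$ sit inside $\Aut\D_{fd}(\surfo)$ (or are identified with subgroups thereof) via compatible maps, this isomorphism is in fact an equality of subgroups, giving $\ST(\surfo) = \BT(\surfo)$.

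I expect the main subtlety to lie in correctly interpreting the isomorphism $\ST(\surfo)/\Aut_0 \cong \BT(\surfo)$ from \cite{QQ}: one must verify that the $\Aut_0$ appearing there intersects $\ST(\surfo)$ in exactly the kernel of the $\Stap$-action, rather than some larger subgroup of the full automorphism group. Once this identification is made precise, the argument is immediate, since faithfulness forces the obstructing normal subgroup to vanish. The remaining check --- that the abstract group isomorphism upgrades to an equality of subgroups of $\Aut\D_{fd}(\surfo)$ --- follows from the fact that the quotient map in \cite{QQ} is induced by the inclusions of these groups into the automorphism group, so no independent verification is needed beyond tracking the maps carefully.
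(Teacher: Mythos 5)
Your proposal is correct and follows exactly the argument the paper intends: the corollary is stated as an immediate consequence of Theorem~\ref{thmbb} together with the isomorphism $\ST(\surfo)/\Aut_0\cong\BT(\surfo)$ from \cite{QQ}, with faithfulness forcing the kernel $\ST(\surfo)\cap\Aut_0$ of the action to be trivial. The paper gives no further details, so your careful remarks about identifying $\Aut_0\cap\ST(\surfo)$ with the kernel of the $\Stap$-action and upgrading the abstract isomorphism to an identification of groups are exactly the (implicit) content of the paper's deduction.
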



\begin{thebibliography}{99}
\newcommand{\au}[1]{\textrm{#1},}
\newcommand{\ti}[1]{\textrm{#1},}
\newcommand{\jo}[1]{\textit{#1}}
\newcommand{\vo}[1]{\textbf{#1}}
\newcommand{\yr}[1]{(#1)}
\newcommand{\pp}[2]{#1--#2.}
\newcommand{\arxiv}[1]{\href{http://arxiv.org/abs/#1}{arXiv:#1}}


\bibitem[A]{A}
  \au{C.~Amiot}
  \ti{Cluster categories for algebras of global dimension 2 and quivers with potential}
  \jo{Ann. Inst. Fourier }
  \vo{59} \yr{2009} \pp{2525}{2590}

\bibitem[BBD]{BBD}
  \au{A.~A.~Beilinson, J.~Bernstein \and P.~Deligne}
  \ti{Faisceaux pervers}
  Asterisque, 100, Soc. Math. France, Paris, 1982.

\bibitem[B]{B}
  \au{T.~Bridgeland}
  \ti{Stability conditions on triangulated categories}
  \jo{Ann. Math. }
  \vo{166} \yr{2007} \pp{317}{345}

\bibitem[BS]{BS}
  \au{T.~Bridgeland \and I.~Smith}
  \ti{Quadratic differentials as stability conditions}
  \jo{Publ. Math. Inst. Hautes \'{E}tudes Sci. } \vo{121} \yr{2015} \pp{155}{278}
  (\arxiv{1302.7030})

\bibitem[BMRRT]{BMRRT}
  \au{A.~B.~Buan, R.~J.~Marsh, M.~Reineke, I.~Reiten \and G.~Todorov}
  \ti{Tilting theory and cluster
  	combinatorics}
  \jo{Adv. Math. }
  \vo{204} \yr{2006} \pp{572}{618}

\bibitem[DWZ]{DWZ}
  \au{M.~Derksen, J.~Weyman \and A.~Zelevinsky}
  \ti{Quivers with potentials and their representations. I. Mutations}
  \jo{Selecta Math. (N.S.) } \vo{14} \yr{2008} \pp{59}{119}
  (\arxiv{0704.0649})

\bibitem[FST]{FST}
  \au{S.~Fomin, M.~Shapiro \and D.~Thurston}
  \ti{Cluster algebras and triangulated surfaces. I. Cluster complexes}
  \jo{Acta Math. } \vo{201} \yr{2008} \pp{83}{146}
  (\arxiv{math/0608367})

\bibitem[FZ]{FZ}
  \au{S.~Fomin \and A.~Zelevinsky}
  \ti{Cluster algebras I: Foundations}
  \jo{J. Amer. Math. Soc. } \vo{15} \yr{2002} \pp{497}{529}
  (\arxiv{math/0104151})

\bibitem[G]{G}
  \au{V.~Ginzburg}
  \ti{Calabi-Yau algebras}
  \arxiv{math/0612139}.

\bibitem[HRS]{HRS}
  \au{D.~Happel, I.~Reiten \and S.~O.~Smal\o}
  \ti{Tilting in abelian categories and quasitilted algebras}
  \jo{Mem. Amer. Math. Soc. }
  \vo{120}
  \yr{1996}
  no. 575, viii+ 88 pp.

\bibitem[K1]{Kel}
  \au{B.~Keller}
  \ti{Deriving DG categories}
  \jo{Ann. Scient. Ec. Norm. Sup.} \vo{27} \yr{1994} \pp{63}{102}

\bibitem[K2]{Kel2}
  \au{B.~Keller}
  \ti{Derived categories and tilting}
  Handbook of tilting theory, 49--104,
  London Math. Soc. Lecture Note Ser., 332, Cambridge Univ. Press, Cambridge, 2007.

\bibitem[K3]{Kel3}
  \au{B.~Keller}
  \ti{Cluster algebras, quiver representations and triangulated categories}
  Triangulated categories, 76--160,
  London Math. Soc. Lecture Note Ser., 375, Cambridge Univ. Press, Cambridge, 2010.

\bibitem[K4]{K8}
  \au{B.~Keller}
  \ti{Deformed Calabi-Yau completions}
  With an appendix by Michel Van den Bergh.
  \jo{J. Reine Angew. Math.} \vo{654} \yr{2011} \pp{125}{180}
  (\arxiv{0908.3499})

\bibitem[K5]{K11}
  \au{B.~Keller}
  \ti{On cluster theory and quantum dilogarithm identities}
  Representations of algebras and related topics, 85--116, EMS Ser. Congr. Rep., Eur. Math. Soc., Z\"{u}rich, 2011.

\bibitem[K6]{Kel4}
  \au{B.~Keller}
  \ti{Cluster algebras and derived categories}
  Derived categories in algebraic geometry, 123--183,
  EMS Ser. Congr. Rep., Eur. Math. Soc., Z\"{u}rich, 2012.

\bibitem[KN]{KN}
  \au{B.~Keller \and P.~Nicol\'{a}s}
  \ti{Weight structures and simple dg modules for positive dg algebras}
  \jo{Int. Math. Res. Not.} \yr{2013} \pp{1028}{1078}
  (\arxiv{1009.5904})

\bibitem[KY]{KY}
  \au{B.~Keller, \and D.~Yang}
  \ti{Derived equivalences from mutations of quivers with potential}
  \jo{Adv. Math.} \vo{226} \yr{2011} \pp{2118}{2168}
  (\arxiv{0906.0761})

\bibitem[KQ]{KQ}
  \au{A.~King \and Y.~Qiu}
  \ti{Exchange graphs and Ext quivers}
  \jo{Adv. Math.} \vo{285} \yr{2015} \pp{1106}{1154}
  (\arxiv{1109.2924})


\bibitem[L-F]{LF}
  \au{D.~Labardini-Fragoso}
  \ti{Quivers with potentials associated to triangulated surfaces}
  \jo{Proc. Lond. Math. Soc. (3) }
  \vo{98}
  \yr{2009}
  \pp{797}{839}
  (\arxiv{0803.1328})



\bibitem[Q1]{QQ}
  \au{Y.~Qiu}
  \ti{Decorated marked surfaces: Spherical twists versus braid twists}
  \jo{Math. Ann.} \vo{365} \yr{2016} \pp{595}{633}
  (\arxiv{1407.0806})

\bibitem[Q2]{Q}
    \au{Y.~Qiu}
    \ti{Decorated marked surfaces (part B): topological realizations}
    \jo{Math. Z.} \vo{288} \yr{2018} \pp{39}{53}.


\bibitem[QZ]{QZ2}
  \au{Y.~Qiu \and Y.~Zhou}
  \ti{Decorated marked surfaces II: Intersection numbers and dimensions of Homs}
  \arxiv{1411.4003}.

\bibitem[R]{R}
  \au{I.~Reiten}
  \ti{Cluster categories}
  Proceedings of the International Congress of Mathematicians. Volume I, 558--594, Hindustan Book Agency, New Delhi, 2010.

\end{thebibliography}
\end{document}